\documentclass[11pt,leqno]{amsart}
\usepackage{amsmath,amssymb,amsthm}
\usepackage{hyperref}

\usepackage{tikz-cd}

\usepackage{hyperref}

\usepackage{bbm}

\DeclareMathOperator{\diam}{diam\,}
\DeclareMathOperator{\co}{co}

\renewcommand{\geq}{\geqslant}
\renewcommand{\leq}{\leqslant}

\newcommand{\norm}[1]{\left\Vert#1\right\Vert}

\newcommand{\spann}{\operatorname{span}}

\newcommand{\1}{\textbf{1}}

\newtheorem{theorem}{Theorem}[section]
\newtheorem{lemma}[theorem]{Lemma}

\newtheorem{proposition}[theorem]{Proposition}

\theoremstyle{definition}
\newtheorem{definition}[theorem]{Definition}
\newtheorem{example}[theorem]{Example}

\theoremstyle{remark}
\newtheorem{remark}[theorem]{Remark}
\newtheorem{question}{Question}
\numberwithin{equation}{section}

\def\fnote#1{\footnote}

\def\natu{{\mathbb N}}

\def\ignora#1{}
\def\n3#1{\left\vert  \! \left\vert \! \left\vert \, #1 \, \right\vert \!
  \right\vert \! \right\vert }

\renewcommand{\leq}{\le}

\usepackage{accents}
\usepackage[most]{tcolorbox}
\usepackage{comment}

\let\emptyset\varnothing

\newcommand{\N}{\mathbb{N}}

\newcommand{\U}{\mathcal{U}}

\newcommand{\eps}{\varepsilon}
\newcommand{\conv}{\text{co}}

\begin{document}

\author{ Abraham Rueda Zoca }\address{Universidad de Granada, Facultad de Ciencias. Departamento de An\'{a}lisis Matem\'{a}tico, 18071-Granada
(Spain)} \email{ abrahamrueda@ugr.es}
\urladdr{\url{https://arzenglish.wordpress.com}}

\subjclass[2020]{46B04; 46B08; 46B20; 46M07}

\keywords{Diameter two properties; ultrapower spaces; coultably incomplete ultrafilters; renorming}

\title{Differences between the uniform diameter two properties}

\begin{abstract}
We study uniform versions of the diameter two properties, that is, the uniform slice-D2P, the uniform D2P and the uniform SD2P, as the property that every ultrapower over any free ultrafilter over $\mathbb N$ has the respective diameter two property. The aim of this note is to prove that all the uniform diameter two properties are different eachother.
\end{abstract}

\maketitle

\section{Introduction}

We recall that a Banach space $X$ has the \textit{slice diameter two property (slice-D2P)} (respectively the \textit{diameter two property (D2P)}, the \textit{strong diameter two property (SD2P)}) if every slice (respectively non-empty relatively weakly open subset, convex combination of slices) of the unit ball has diameter two. It is evident that the D2P implies the slice-D2P since every slice of $B_X$ is a non-empty relatively weakly open subset. Furthermore, since every non-empty relatively weakly open subset of $B_X$ contains a convex combination of slices of $B_X$ thanks to a result of Bourgain \cite[Lemma II.1]{ggms}, the SD2P implies the D2P. It is known that none of the above implication reverse \cite{blr15eje2}. We refer the reader to \cite{ahlt26,ahntt16,almt21,blr15eje1,blr15eje2,nw01,rueda26} and references therein for background about the diameter two properties.

The literature on the diameter two properties is vast and it has been analysed in multiple classes of Banach spaces such as spaces of functions \cite{bl06}, tensor product spaces \cite{blr4,hlp17,rueda26} or in spaces of Lipschitz functions and Lispchitz-free spaces \cite{gpr18,hop,lr2020}. The study of the diameter two properties in ultrapower spaces, however, remained unnoticed until \cite{rz25}. Before going on, let us introduce the following notation: given a Banach space $X$, we say that $X$ has the \textit{uniform slice diameter two property (uniform slice-D2P)} (respectively the \textit{uniform diameter two property (uniform D2P)}, the \textit{uniform strong diameter two property (uniform SD2P)} if $X_\mathcal U$ has the slice-D2P (respectively the D2P, the SD2P) for any free ultrafilter $\mathcal U$ over $\mathbb N$.

Coming back to the above mentioned paper \cite{rz25}, a complete characterisation of the uniform slice-D2P was obtained making use of the following result of Y. Ivakhno \cite[Lemma 1]{iva06}: a Banach space $X$ has the slice-D2P if, and only if, $B_X=\overline{\conv}\left\{\frac{x+y}{2}: x,y\in B_X, \Vert x-y\Vert\geq 2-\varepsilon\right\}$ holds for every $\varepsilon>0$. Thanks to the above mentioned result by Ivakhno, which is a characterisation of the slice-D2P that avoids the access to the topological dual, the uniform slice-D2P in a Banach space $X$ could be described in \cite[Corollary 3.4]{rz25} in terms of a geometric property of the underlying space $X$ (see Subsection~\ref{subsect:diametertwo} for details). This permitted to provide a number of examples of spaces with the uniform slice-D2P. Moreover, the above result was also used in order to show that, for every $\varepsilon>0$, there exists a Banach space $X$ with the slice-D2P and such that $X_\mathcal U$ contains slices of diameter smaller than $\varepsilon$ for any free ultrafilter $\mathcal U$
over $\mathbb N$ \cite[Theorem 4.6]{rz25} (let us point out that the above example follows literally the construction of a Banach space $X$ with the Daugavet property which fails the uniform Daugavet property by V. Kadets and D. Werner from \cite{kw04}). In the same spirit as in \cite{rz25}, in the recent paper \cite{mr26uni} the uniform SD2P is characterised by making use of a characterisation of the SD2P which does not make use of the access to the dual space \cite[Theorem 3.6]{mr26uni}: a Banach space $X$ has the SD2P if, and only if, for every $n \in \N$, $x_1, \cdots, x_n \in B_X$ and $\eps >0$, there exist an $m \in \N$, some $y_{ij} \in B_X$ for $i=1, \cdots, n$, $j=1, \cdots, m$ and $\alpha_1, \cdots, \alpha_m >0$  with $\sum_{j=1}^m \alpha_j =1$ such that $\norm{x_i - \sum_{j=1}^m \alpha_j y_{ij}} < \eps$ holds for every $i=1, \cdots, n$ and $\norm{\sum_{i=1}^n \dfrac{1}{n} y_{ij}}> 1-\eps$ holds  for every $j=1, \cdots, m$. The above reformulation allowed to get a characterisation of the uniform SD2P in \cite[Theorem 3.7]{mr26uni} which permited to obtain a number of examples of spaces with the uniform SD2P. Concerning the uniform D2P, up to the best of the author knowledge, no systematic study has been done of this property. Observe that any of the uniform diameter two property implies its non uniform version by the arguments of \cite[Proposition 3.11]{mr26uni}.

In view of the fact that the 3 diameter two properties are different, a natural question is whether the uniform versions are indeed different. Taking into account the well known description of the diameter two properties by taking $\ell_p$ sums \cite{abl15}, and thanks to the isometric identification $X_\mathcal U \oplus_p Y_\mathcal U=(X\oplus_p Y)_\mathcal U$, it was pointed out in \cite[Remark 3.10]{mr26uni} that the uniform D2P and the uniform SD2P are different properties. The question whether the uniform slice-D2P and the uniform D2P are different is open (also asked in the above mentioned \cite[Remark 3.10]{mr26uni}). Observe that the main difficulty for facing this problem is that, contrary to what happen for the slice and strong version, no description of the D2P in a ultrapower space $X_\mathcal U$ is known in terms of a geometric property of the base space $X$.

The first aim of this note is to provide examples of Banach spaces with the uniform slice-D2P but failing even the D2P. Indeed, the first main theorem of the paper is the following.

\begin{theorem}\label{theo:maintheodiffslid2punif}
Let $X$ be a Banach space containing an isomorphic copy of $c_0$. Then there exists an equivalent renorming on $X$ with the uniform slice-D2P but such that the unit ball contains non-empty relatively weakly open subsets of arbitrarily small diameter. \end{theorem}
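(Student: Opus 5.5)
Following the classical approach of \cite{blr15eje2}, the plan is to first build the renorming on $c_0$ and then transfer it to $X$. In \cite{blr15eje2} the renorming exhibits the slice-D2P together with weakly open subsets of small diameter. It is built on $c_0$, or more generally on a space containing $c_0$. The idea is to take the norm whose unit ball is
$$B_\varepsilon=\overline{\co}\Bigl(\bigl(2(B_{c_0}\cap K)\bigr)\cup\bigl(B_{c_0}\cap\{x:\ |x_n|\le\varepsilon\ \forall n\}\bigr)\Bigr),$$
or a variant of it. Here $K$ is the positive cone, shifted appropriately as in Becerra--L\'opez--Rueda. Two properties of this ball matter:
\begin{itemize}
\item a neighbourhood of a point with $\Vert x\Vert=1$ contains a weakly open set of diameter $O(\varepsilon)$;
\item every slice still contains long segments, because of the $c_0$ unit vectors $e_n\to 0$ weakly.
\end{itemize}

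Given $X\supseteq c_0$ isomorphically, I would proceed in three steps.
\begin{enumerate}
\item Renorm $X$ so that $c_0$ sits isometrically inside it. Then use the standard extension of the new ball from $c_0$ to all of $X$. By Sobczyk's theorem this is possible when $X$ is separable. In general one works with $c_0\oplus Y$ only locally: take $B=\overline{\co}(B_\varepsilon\cup \delta B_X)$ with $\delta$ small, so that the small-diameter weakly open set near a point of $c_0$ survives.
\item Show that the weakly open set $W=\{x\in B: f_i(x)>1-\eta\}$ has diameter $\le C\varepsilon$, exactly as in \cite{blr15eje2}. This is done by controlling finitely many coordinate functionals together with a norming functional.
\item Since the statement asks for \emph{arbitrarily small} diameter within a single norm, glue the $\varepsilon_k$-versions together. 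Either take an $\ell_\infty$-type sum of $c_0$ blocks with $\varepsilon_k\to 0$ (still isomorphic to $c_0$), or take an $\ell_1$-like convex hull of the balls $B_{\varepsilon_k}$ scaled appropriately. In either case check that the weakly open sets of small diameter persist.
\end{enumerate}

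For the uniform slice-D2P I would use the characterisation from \cite[Corollary 3.4]{rz25}, via Ivakhno's lemma. It suffices to show that for every $\varepsilon'>0$ and every $x\in B$ there are a convex combination $\sum\lambda_i\frac{y_i+z_i}{2}$ approximating $x$ and the pairs $(y_i,z_i)$, with $\Vert y_i-z_i\Vert\ge 2-\varepsilon'$. The number of terms and the pairs must be chosen \emph{quantitatively uniformly}: the number of points and the approximation error must not depend on $x$. This uniformity is what the ultrapower requires. For $x$ in $B$ it is enough to treat the generating points:
\begin{itemize}
\item for a point $u$ of the generating set with large support index, write $u=\frac{(u+e_N)+(u-e_N)}{2}$, or $u\pm$ a block of coordinates far out, so that both terms lie in $B$ and are at distance about $2$;
\item since elements of $c_0$ are only approximately finitely supported, choose $N$ beyond the essential support of $u$.
\end{itemize}
The uniformity then comes from the fact that we use one pair per point, with error depending only on $\varepsilon'$.

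The main obstacle is the compatibility of these two features. The distinguished small-diameter weakly open set must live in a region where adding $\pm e_N$ leaves $B$. So one must check that near such points, representations of $x$ as averages of diametral pairs still exist, but only through nontrivial convex combinations of several such pairs. Weakly open sets, unlike slices, can avoid these combinations, and this is exactly the gap between the slice-D2P and the D2P. I would first verify this on the concrete ball in $c_0$, including an explicit decomposition of points of the positive cone part. Only then would I handle the gluing over $\varepsilon_k$ and the extension to $X$.
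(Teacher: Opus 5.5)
Your argument for the uniform slice-D2P has a genuine gap exactly where uniformity enters, and the construction you sketch would not produce the required space.

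\textbf{The uniformity step.} By Theorem~\ref{theo:charunislice} you need $\lim_n C_n^{\alpha}=0$. That is, every point of the ball must be within $\delta_n\to 0$ of a convex combination of at most $N(n)$ midpoints of almost diametral pairs, with $N(n)$ and $\delta_n$ independent of the point.

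\begin{itemize}
\item Reducing to generating points destroys this. A general point is a limit of convex combinations of \emph{arbitrarily many} generating points. Handling each generating point separately yields combinations of an unbounded number of midpoints, so you get no bound on any $C_n^\alpha$.
\item The claim ``one pair per point'' is false. Points of a shifted piece $2K-\1$ have far coordinates near $-1$. For instance, at $u=2b_\alpha-\1$ every coordinate $\gamma\not\le\alpha$ equals $-1$, so $u-e_\gamma$ has sup norm $2$ and lies outside the ball, since $\Vert\cdot\Vert\le\Vert\cdot\Vert_\varepsilon$.
\item The diametral pairs that do exist have midpoints at distance of order $1$ from the point. This holds for $(2b_{\alpha\wedge k}-\1,\,2b_\alpha-\1)$. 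It also holds for $x+(1-a_N)e_N$, $x-(1+a_N)e_N$ from Lemma~\ref{lemma:c0inicial}, whose midpoint is $x-a_Ne_N$ with $a_N$ not necessarily tending to $0$.
\end{itemize}

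The paper's proof rests on three ingredients missing from your sketch.
\begin{enumerate}
\item[(i)] Since $\frac{A-A}{2}\subseteq B$, \cite[Lemma 2.4]{blr15eje2} gives $\co(A\cup -A\cup B)=\co(A\cup B)\cup\co(-A\cup B)$. By convexity of $A$ and $B$, a dense set of points of $B_\varepsilon$ then has, up to sign, the form $\lambda(2\varphi-\1)+(1-\lambda)((1-\varepsilon)x+\varepsilon y)$ with a \emph{single} $\varphi\in STS$ and $y\in B_{c_{00}}$.
\item[(ii)] Proposition~\ref{prop:aprouniSTS} is the key quantitative step: every $\varphi\in STS$ is within $3/n$ of a convex combination of at most $n$ vectors $b_\alpha$. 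The identity $STS=\overline{\co}\{b_\alpha\}$ alone gives no control on the number of terms.
\item[(iii)] Each of the $n$ resulting points is approximated by an average of $n$ diametral pairs living on $n$ distinct successors $\alpha_j\wedge(m_0+i)$ outside the support of $y$. The sup norm of $c_0$ turns the order-one errors into $O(1/n)$.
\end{enumerate}
Together these give $C^{2-\delta}_{n^2}\le\frac{9}{(1-\varepsilon)n}$.

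\textbf{The construction.} A shifted positive-cone piece such as $2B_{c_0}^+-\1$ has no small relatively weakly open subsets at all. For $x\in B_{c_0}^+$, the points $x+(1-x_N)e_N$ stay in $B_{c_0}^+$, converge weakly to $x$, and lie at distance $1-x_N\to 1$ from it. So this piece gives no support for your first bulleted property.

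What is needed is a closed bounded convex subset of $c_0(\mathcal T)$ whose slices all have full diameter but which has relatively weakly open subsets of arbitrarily small diameter. This is the Argyros--Odell--Rosenthal set $STS$, used as $A=2STS-\1$. The relevant renorming is \cite[Theorem 2.4]{blr15eje1}; \cite{blr15eje2} is the $K_0$ construction separating the D2P from the SD2P.

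Your extension $\overline{\co}(B_\varepsilon\cup\delta B_X)$ also loses the slice-D2P. For a functional vanishing on $c_0$, the slices consist of points close to a slice of $\delta B_X$. For example, with $X=c_0\oplus_1\ell_2$ and $f=(0,e_1^*)$, these slices have small diameter.

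The paper needs neither Sobczyk's theorem nor separability. It first applies Lemma~\ref{lemma:c0inicial}, so that $c$ is isometric in $X$ and \emph{every} $x\in B_X$ admits $x+(1-a_n)e_n,\ x-(1+a_n)e_n\in B_X$. It then puts $(1-\varepsilon)B_X+\varepsilon B_{c_0}$ into the ball.

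Finally, your gluing over $\varepsilon_k$ is unnecessary. For each fixed $\varepsilon$, the ball $B_\varepsilon$ already has weakly open subsets of arbitrarily small diameter, which the paper quotes from \cite{blr15eje1}.
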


In order to prove the above theorem we will perform the renorming from \cite[Theorem 2.4]{blr15eje1}, where it is proved that every Banach space containing $c_0$ admits an equivalent renorming with the slice-D2P but whose unit ball contains non-empty relatively weakly open subsets of arbitrarily small diameter. We will prove that this equivalent renorming satisfies the uniform slice-D2P by a direct application of \cite[Corollary 3.4]{rz25}. In order to do so, we will introduce the set $STS$ in Section~\ref{section:STS}, because such set is strongly employed in the renorming technique from \cite[Theorem 2.4]{blr15eje1}, and we will prove a result of uniform approximation in Proposition~\ref{prop:aprouniSTS} which is critically used in order to prove Theorem~\ref{theo:maintheodiffslid2punif} in Section~\ref{section:prooftheoremunislice}.

Even though it is known that the uniform D2P and the uniform SD2P are different properties, in view of the statement of Theorem~\ref{theo:maintheodiffslid2punif} and the statement of \cite[Theorem 2.5]{blr15eje2}, it is natural to wonder whether there are Banach spaces with the uniform D2P and such that the unit ball contains convex combinations of slices of arbitrarily small diameter. 

In view of the above mentioned identification $X_\mathcal U \oplus_p Y_\mathcal U=(X\oplus_p Y)_\mathcal U$, a natural way to get one such counterexample could be to consider an infinite $\ell_p$-sum of Banach spaces with the uniform SD2P, because in one such space there are convex combinations of slices of the unit ball of arbitrarily small diameter (see Proposition~\ref{prop:ccslicearbipe}). However, we are unable to follow this circle of ideas because of two reasons: on the one hand; the identification $X_\mathcal U \oplus_p Y_\mathcal U=(X\oplus_p Y)_\mathcal U$ fails to be true if we consider infinitely many Banach spaces (see Example~\ref{exam:ultrapower}) and; on the other hand, we do not know whether the uniform D2P is inherited by infinite $\ell_p$-sums because, as we have already indicated, we do not know any precise description of the uniform D2P in a Banach space. 

Because of all the above reasons, we will consider a different approach: we will consider the renorming technique from \cite[Theorem 2.5]{blr15eje2} and we will get the second main theorem of the paper.

\begin{theorem}\label{theo:maintheodiffd2punif}
Let $X$ be a Banach space containing an isomorphic copy of $c_0$. Then there exists an equivalent renorming on $X$ with the uniform D2P but such that the unit ball contains convex combinations of slices of arbitrarily small diameter. 
\end{theorem}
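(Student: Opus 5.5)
We will follow the renorming of \cite[Theorem 2.5]{blr15eje2}. We first reduce to a model space. Since $X$ contains an isomorphic copy of $c_0$, the classical argument used in \cite{blr15eje2} shows that it suffices to renorm $c_0$ suitably and then extend the norm to $X$. Concretely, $c_0$ is separably injective, so $X\simeq c_0\oplus Z$ after passing to the complemented copy (Sobczyk, in the separable case; in general one argues via a suitable subspace as in \cite{blr15eje2}). The norm on $X$ is then taken to be of the form $\Vert (x,z)\Vert=\max\{\vert x\vert,\Vert z\Vert\}$, or the analogous combination used there, where $\vert\cdot\vert$ is the special norm on $c_0$.

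The special unit ball in \cite[Theorem 2.5]{blr15eje2} is built from a set of the type $STS$ (Section~\ref{section:STS}), now intersected with the ball of $c_0$ in such a way that it has the D2P. The point is that it yields, for every $\varepsilon>0$, convex combinations of slices of diameter at most $\varepsilon$. The latter property is about the base space only, so it is inherited directly from \cite{blr15eje2}. The whole task is therefore to prove that $X_\mathcal U$ has the D2P for every free ultrafilter $\mathcal U$ over $\mathbb N$.

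For this we cannot use a known characterisation, so we argue directly in $X_\mathcal U$. The plan has four steps.

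\begin{enumerate}
\item Fix a non-empty relatively weakly open set $W\subseteq B_{X_\mathcal U}$. It contains a set $\{u\in B_{X_\mathcal U}: \vert f_i(u)-f_i(u_0)\vert<\delta,\ i\le k\}$ for some $u_0=(x_n)_\mathcal U$ with $\Vert u_0\Vert<1$ and some $f_i\in (X_\mathcal U)^*$.
\item The $f_i$ need not come from sequences of functionals. We therefore use the structure of the renorming: the D2P in \cite{blr15eje2} comes from perturbing points by large $c_0$-tail vectors. Given $x\in B_X$ in the relevant dense set (via Proposition~\ref{prop:aprouniSTS}), there are vectors $x\pm y_m$ in $B_X$, with $\Vert y_m\Vert\ge 1-\varepsilon$ and $y_m$ weakly null (block supports going to infinity). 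The constants must be \emph{uniform in $x$}, which is exactly what Proposition~\ref{prop:aprouniSTS} supplies.
\item Apply this coordinatewise to $x_n$ to get sequences $(y_{n,m})_m$, and build in $X_\mathcal U$ an element $v_m=(y_{n,m})_\mathcal U$ such that $u_0\pm v_m\in B_{X_\mathcal U}$ and $\Vert v_m\Vert\ge 1-\varepsilon$.
\item Show that some $v_m$ can be chosen with $\vert f_i(v_m)\vert<\delta$ for all $i\le k$. Then $u_0\pm v_m\in W$, and $W$ has diameter at least $2-2\varepsilon$.
\end{enumerate}

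The main obstacle is step 4, since $(v_m)$ need not be weakly null in $X_\mathcal U$. To overcome it I would use a different device: choose, for each $n$, $N$ vectors $y_{n,1},\dots,y_{n,N}$ with pairwise disjoint supports, so that $\Vert \sum_j \pm y_{n,j}\Vert$ stays controlled by the $c_0$ (max) behaviour. The ultrapower elements $v_1,\dots,v_N$ then span an almost isometric copy of $\ell_\infty^N$ in $X_\mathcal U$, uniformly because of the uniform approximation. A standard pigeonhole argument on $\ell_\infty^N$ then gives a sign combination or average $w$ of the $v_j$, still of norm close to $1$, with $\vert f_i(w)\vert\le \Vert f_i\Vert\cdot O(1/N)$. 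Indeed, a functional restricted to a copy of $\ell_\infty^N$ is in $\ell_1^N$, so most of its coordinates are small, and we can pick $j$ with $\sum_i\vert f_i(v_j)\vert\le \sum_i\Vert f_i\Vert/N$. Taking $N$ large then finishes the proof. Checking that $u_0\pm v_j\in B_{X_\mathcal U}$ uniformly along $\mathcal U$ is exactly where Proposition~\ref{prop:aprouniSTS} has to be invoked, and I expect it to require the most care.
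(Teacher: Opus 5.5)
\textbf{There is a genuine gap: your plan is built on the wrong set and the wrong key lemma, and step~2 is false as stated.}

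The renorming of \cite[Theorem 2.5]{blr15eje2} that the paper uses has unit ball $B_\varepsilon=\overline{\co}\big((2K_0-\1)\cup-(2K_0-\1)\cup((1-\varepsilon)B_X+\varepsilon B_{c_0})\big)$. Here $K_0$ is the set of \cite{aor88} recalled in Section~\ref{section:prelisegureno}, not $STS$. The $STS$-renorming is the one of \cite[Theorem 2.4]{blr15eje1}. Its unit ball has relatively weakly open subsets of arbitrarily small diameter, so it fails the D2P and hence the uniform D2P; Theorem~\ref{theo:maintheodiffslid2punif} only gives it the uniform slice-D2P.

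For the same reason, Proposition~\ref{prop:aprouniSTS} cannot supply what you ask of it. It approximates $\varphi\in STS$ by $\sum_{j\le n}\lambda_j b_{\alpha_j}$. Perturbing each $b_{\alpha_j}$ along a successor only moves the combination by a vector of norm of order $\max_j\lambda_j$. That is the convex-combination-of-midpoints condition of Theorem~\ref{theo:charunislice} (slice-D2P), never a single norm-one weakly null perturbation.

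The missing idea is the defining feature of $K_0$: a dense sequence $(g_p)$ with $g_p+e_{m_n+p}\in K_0$ for all $n$. Consequently every $u=\lambda(2g_p-\1)+(1-\lambda)((1-\varepsilon)x+\varepsilon y)$ with $y\in B_{c_{00}}$ admits $u^n,v^n\in B_\varepsilon$, built with Lemma~\ref{lemma:c0inicial}, such that:
\begin{itemize}
\item $u^n-v^n=2e_{k_n}$, where $k_n=m_{n+q}+p\to\infty$;
\item $u^n-u$ and $v^n-u$ are multiples of $e_{k_n}$ with coefficients bounded by $3$, independently of $u$.
\end{itemize}

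With this, no uniform approximation lemma is needed. By Propositions~\ref{prop:ultracierre} and \ref{prop:ultraunion} and \cite[Lemma 2.4]{blr15eje2}, every point of $B_{(X_\varepsilon)_\mathcal U}$ has a representative $[u_i]$ with each $u_i$ of this form, up to sign. Then $e_n\mapsto[u_i^n]-[u_i]$ defines an operator $c_0\to(X_\varepsilon)_\mathcal U$ of norm at most $3/(1-\varepsilon)$, so the sequence is weakly null. Your step~4 pigeonhole on $\ell_\infty^N$ is exactly this upper $c_0$-estimate, so that part is sound. In fact the sequence is weakly null, contrary to your worry.

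\textbf{Two further points also fail as written.}

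First, symmetric perturbations $u_0\pm v_m$ with $\Vert v_m\Vert$ close to $1$ are not what the renorming provides. For $u$ as above one only has $u+c^+e_k$ and $u-c^-e_k$ with $c^++c^-=2$ and $c^-=(1-\lambda)(1+(1-\varepsilon)a_k)$, which vanishes when $\lambda=1$. Indeed $(2g_p-\1)(k)=-1$ on the tail, so $(2g_p-\1)-te_k\notin B_\varepsilon$ for every $t>0$. The correct formulation is Proposition~\ref{prop:condisufiD2P}: two sequences with the same weak limit whose differences have norm $2$. That also gives diameter exactly $2$, whereas $\Vert v_m\Vert\ge 1-\varepsilon$ with $\varepsilon$ the fixed renorming parameter only gives $2-2\varepsilon$.

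Second, the reduction $X\simeq c_0\oplus Z$ with a $\max$-norm is not available in general, since $c_0$ need not be complemented when $X$ is nonseparable. Even when it is available, if $Z$ has the SD2P then every convex combination of slices of $B_{c_0\oplus_\infty Z}$ contains a product set whose $Z$-factor contains a convex combination of slices of $B_Z$. That set has diameter $2$, so the small-convex-combination property is lost. The paper avoids both problems by using Lemma~\ref{lemma:c0inicial}, which needs no complementation, together with the closed-convex-hull ball $B_\varepsilon$.
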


In order to prove the above theorem we will perform the renorming from \cite[Theorem 2.5]{blr15eje2}, where it is proved that every Banach space containing $c_0$ admits an equivalent renorming with the slice-D2P but whose unit ball contains non-empty relatively weakly open subsets of arbitrarily small diameter. If we consider one such $X$ equipped with the above mentioned norm, our strategy will be to prove that $X_\mathcal U$ has the D2P for any free ultrafilter $\mathcal U$ over $\mathbb N$. The main idea here is to take advantage of the strong $c_0$ behaviour of this renorming in order to get appropriate weakly null sequences in a ultrapower space. In order to do so, we will present in Section~\ref{section:prelisegureno} some preliminary content for the proof of Theorem~\ref{theo:maintheodiffd2punif}: first, we will recall the definition of a subset $K_0\subseteq c_0$ which is behind the renorming technique from \cite[Theorem 2.5]{blr15eje2}; second, we will exhibit in Proposition~\ref{prop:condisufiD2P} a sufficient condition for a Banach space $X$ to enjoy the D2P in terms of weakly convergent sequences. With this preliminary material, we will prove Theorem~\ref{theo:maintheodiffd2punif} in Section~\ref{section:prooftheoremunid2p}. We will end the paper with some remarks in Section~\ref{section:remarks}.

\section{Notation and preliminary results}

We will consider real Banach spaces. Given a Banach space $X$, $B_X$ (respectively $S_X$) stands for the closed unit ball (respectively the unit sphere) of $X$. We will denote by $X^*$ the topological dual of $X$. Given a subset $C$ of $X$, we will denote by $\conv(C)$ the convex hull of $C$. We also denote by $\conv_m(C)$ the set of all convex combinations of at most $m$ elements of $C$, that is
$$\conv_m(C):=\left\{\sum_{i=1}^m \lambda_i x_i: \lambda_1,\ldots, \lambda_m\in [0,1], \sum_{i=1}^m \lambda_i=1, x_1,\ldots, x_m\in C \right\}.$$

If $C$ is a bounded subset of a Banach space $X$, by a \textit{slice} of $C$ we will mean a set of the following form
$$S(C,f,\alpha):=\{x\in C:  f(x)>\sup f(C)-\alpha\}$$
where $f\in X^*$ and $\alpha>0$. Notice that a slice is nothing but the non-empty intersection of a half-space with the bounded (and not necessarily convex) set $C$.

If $C$ is assumed to be convex we will mean by a \textit{convex combination of slices of $C$} a set of the following form
$$\sum_{i=1}^n \lambda_i S_i,$$
where $\lambda_1,\ldots, \lambda_n\in ]0,1]$ are such that $\sum_{i=1}^n \lambda_i=1$ and $S_i$ is a slice of $C$ for every $i\in\{1,\ldots, n\}$.

\subsection{Ultraproducts of Banach spaces}

Given a family of Banach spaces $\{X_\lambda: \lambda \in \Lambda\}$ we denote 
$$\ell_\infty(\Lambda,X_\lambda):=\left\{f \in \prod \limits_{\lambda\in \Lambda} X_\lambda: \sup_{\lambda \in \Lambda}\Vert f(\lambda)\Vert<\infty\right\},$$
and for any free ultrafilter $\mathcal U$ over $\Lambda$ we can consider the following closed subspace of $\ell_\infty(\Lambda, X_\lambda)$ 
$$c_{0,\mathcal U}(\Lambda,X_\lambda):= \left\{f\in \ell_\infty(\Lambda, X_\lambda): \lim_\mathcal U \Vert f(\lambda)\Vert=0 \right\}.$$
The \textit{ultraproduct of $\{X_\lambda: \lambda \in \Lambda\}$ with respect to $\mathcal U$} is the quotient Banach space
$$(X_\lambda)_\mathcal U:=\ell_\infty(\Lambda,X_\lambda)/c_{0,\mathcal U}(\Lambda,X_\lambda).$$
If every $X_\lambda$ is equal to some fixed Banach space $X$ we will call the ultraproduct of that family the \textit{ultrapower of $X$}, which will be denoted by $X_\U$.

We denote the image of $x \in \ell_\infty(\Lambda,X_\lambda)$ by some $\lambda \in \Lambda$ as $x_\lambda$ and by $[x_\lambda]_\mathcal U$, or simply $[x_\lambda]$ if there is no confusion, the coset $x + c_{0,\mathcal U}(\Lambda, X_\lambda) \in (X_\lambda)_\U$. Furthermore, from the definition of the quotient norm, it is not difficult to prove that
$$\Vert [x_\lambda]\Vert_\U=\lim_\mathcal U \Vert x_\lambda\Vert$$
for every $[x_\lambda] \in (X_\lambda)_\mathcal U$.  This implies that the canonical inclusion $j:X\rightarrow X_\mathcal U$ given by
$$j(x):=[x]_\mathcal U$$
is an into linear isometry. It is well known (c.f. e.g. Propositions 6.1 and 6.2 in \cite{hein80}) that $X_\mathcal U$ is finitely representable in $j(X)$. 

Given any subset $A\subseteq X$, we denote by
$$A_\mathcal U:=\{[x_i]\in X_\mathcal U: x_i\in A\ \forall i\in I\}.$$

Observe that, given $[x_i]\in X_\mathcal U$, it is always possible to choose a representative $[y_i]\in X_\mathcal U$ such that $[y_i]=[x_i]$ and $\Vert y_j\Vert\leq \lim_\mathcal U \Vert x_i\Vert=\Vert [x_i]\Vert$ holds for every $j\in I$ (see e.g. \cite[Remark 2.4]{mr26uni}). This proves in particular that $B_{(X)_\mathcal U}=(B_X)_\mathcal U$.

In the sequel we will recall a couple of properties of $A_\mathcal U$ that we will need. The first one is an easy behaviour with respect to finite unions.

\begin{proposition}\label{prop:ultraunion}
Let $I$ be an infinite set and $\mathcal U$ be a free ultrafilter over $I$. Let $X$ be a Banach space and let $A,B\subseteq X$. Then
$$(A\cup B)_\mathcal U=A_\mathcal U \cup B_\mathcal U.$$
\end{proposition}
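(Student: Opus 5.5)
The inclusion $A_\mathcal U\cup B_\mathcal U\subseteq (A\cup B)_\mathcal U$ is immediate from the definition. If $[x_i]\in A_\mathcal U$, then we may take a representative with $x_i\in A\subseteq A\cup B$ for every $i\in I$, so $[x_i]\in (A\cup B)_\mathcal U$. The same holds for $B_\mathcal U$. Hence the whole content of the statement is the reverse inclusion.

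For the reverse inclusion, take $[x_i]\in (A\cup B)_\mathcal U$ with $x_i\in A\cup B$ for every $i\in I$. Split the index set as $I_A:=\{i\in I: x_i\in A\}$ and $I_B:=I\setminus I_A$, so that $x_i\in B$ for $i\in I_B$. Since $I=I_A\cup I_B$ and $\mathcal U$ is an ultrafilter, exactly one of $I_A$, $I_B$ belongs to $\mathcal U$. Assume $I_A\in\mathcal U$; the other case is symmetric.

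If $A=\emptyset$, then $I_A=\emptyset$, which cannot lie in $\mathcal U$, so $A\neq\emptyset$ and we may fix some $a\in A$. Define $y_i:=x_i$ for $i\in I_A$ and $y_i:=a$ for $i\in I_B$. Then $(y_i)$ is bounded, because $\sup_i\Vert y_i\Vert\leq \max\{\sup_i\Vert x_i\Vert,\Vert a\Vert\}$, and $y_i\in A$ for every $i$. Moreover, $\{i: \Vert x_i-y_i\Vert=0\}\supseteq I_A\in\mathcal U$, so $\lim_\mathcal U\Vert x_i-y_i\Vert=0$. Thus $(x_i)-(y_i)\in c_{0,\mathcal U}(I,X)$ and $[x_i]=[y_i]\in A_\mathcal U$.

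The only point needing care is that the modified sequence must stay bounded and lie entirely in $A$. Filling the indices in $I_B$ with a fixed element $a\in A$ takes care of both, and the argument above shows $A\neq\emptyset$ whenever $I_A\in\mathcal U$, so such an $a$ exists. Apart from this, the proof is just the ultrafilter dichotomy for a partition of $I$ into two pieces.
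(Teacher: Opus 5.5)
Your proof is correct and follows the paper's argument: partition $I$ by whether $x_i\in A$, use the ultrafilter dichotomy, and overwrite the indices outside the chosen piece by a fixed element of $A$ (resp.\ $B$) to get a representative of the same class lying entirely in $A$ (resp.\ $B$). Your extra observations that $A\neq\emptyset$ whenever $I_A\in\mathcal U$, and that the modified family stays bounded, are small points the paper leaves implicit.
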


\begin{proof}
It is evident that $\supseteq$ does hold since $A_\mathcal U$ and $B_\mathcal U$ are trivially subsets of $(A\cup B)_\mathcal U$. For the converse, select any $[x_i]\in (A\cup B)_\mathcal U$, so given $i\in I$ we have $x_i\in A\cup B$. Now define the sets
$$U:=\{i\in I: x_i\in A\}, V:=I\setminus U=\{i\in I: x_i\notin A\}.$$
Since $U,V$ form a partition of $I$ and $\mathcal U$ is a maximal filter then either $U$ or $V$ belongs to $\mathcal U$. 

If $U\in \mathcal U$ let us prove that $[x_i]\in A_\mathcal U$. Indeed, select any $a_0\in A$ and define
$$y_i:=\left\{\begin{array}{cc}
  x_i   &  i\in U \\
  a_0   & i\notin U
\end{array} \right.$$
It is immediate that $[y_i]\in A_\mathcal U$. Moreover, $[x_i]=[y_i]$. Indeed, given any $\varepsilon>0$, it follows that
$$U\subseteq A_\varepsilon:=\{i\in I: \Vert x_i-y_i\Vert<\varepsilon\}.$$
Since $U\in\mathcal U$ we infer $A_\varepsilon\in\mathcal U$. By the arbitrariness of $\varepsilon$ we get that $\Vert [x_i]-[y_i]\Vert=\lim_\mathcal U\Vert x_i-y_i\Vert=0$, and we are done.

In the case that $V\in\mathcal U$ we prove similarly that this time $[x_i]\in B_\mathcal U$. This proves the inclusion $\subseteq$ and finishes the proof.
\end{proof}

Another result relates the ultrapower of a set and the ultrapower of its closure. In order to establish it, recall that a ultrafilter $\mathcal U$ over an infinite set $I$ is said to be \textit{countably incomplete} if there exists a sequence $\{A_n\}\subseteq \mathcal U$ such that $\bigcap\limits_{n\in\mathbb N} A_n=\emptyset$. It is not difficult to prove that any countably incomplete ultrafilter $\mathcal U$ is free, and that any free ultrafilter over $\mathbb N$ is countably incomplete. It is worth mentioning that, given an infinite set $I$, the fact that every free ultrafilter over $I$ is countably incomplete is a frequent phenomenon. For instance, it is known \cite[Theorem 2.5]{hj} that if $\kappa$ is a cardinal with a free ultrafilter which is not countably incomplete, then $\kappa$ is a strongly inaccessible cardinal (see \cite{hj} for background).

Anyway, with the notion of countably incomplete ultrafilters let us present the last result of the subsection, whose proof can be found, for instance, in \cite[Proposition 1.2.5]{grelierdiss}.

\begin{proposition}\label{prop:ultracierre}
Let $X$ be a Banach space and let $A\subseteq X$ be a bounded set. Let $\mathcal U$ be a countably incomplete ultrafilter over an infinite set $I$. Then $A_\mathcal U$ is closed and
$$(\overline{A})_\mathcal U=A_\mathcal U=\overline{A_\mathcal U}.$$
\end{proposition}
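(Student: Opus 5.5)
We prove three facts in order: that $A_\mathcal U\subseteq(\overline{A})_\mathcal U$, that $(\overline{A})_\mathcal U\subseteq A_\mathcal U$, and that $A_\mathcal U$ is closed in $X_\mathcal U$. Together these give the full chain of equalities, since trivially $A_\mathcal U\subseteq\overline{A_\mathcal U}$.

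\textbf{Step 1.} The inclusion $A_\mathcal U\subseteq(\overline{A})_\mathcal U$ is immediate, because $A\subseteq\overline{A}$.

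\textbf{Step 2.} For the reverse inclusion, take $[x_i]$ with every $x_i\in\overline{A}$ (which is bounded, so the family lies in $\ell_\infty$). We would like to approximate each $x_i$ by some $a_i\in A$ with $\Vert x_i-a_i\Vert\to 0$ along $\mathcal U$; merely choosing distance less than a fixed $\varepsilon$ is not enough. Here countable incompleteness enters. Fix $\{A_n\}\subseteq\mathcal U$ with $\bigcap_n A_n=\emptyset$, and replace it by $B_n:=A_1\cap\cdots\cap A_n$, which is decreasing, lies in $\mathcal U$ and still has empty intersection. For $i\in I$ set $n(i):=\max\{n: i\in B_n\}$, with the convention $n(i)=0$ if $i\notin B_1$; this is finite because the intersection is empty. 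Choose $a_i\in A$ with $\Vert x_i-a_i\Vert<1/(n(i)+1)$. Given $\varepsilon>0$, pick $N$ with $1/(N+1)<\varepsilon$. Then $B_N\subseteq\{i:\Vert x_i-a_i\Vert<\varepsilon\}$, so the latter set lies in $\mathcal U$. Hence $[x_i]=[a_i]\in A_\mathcal U$. The family $(a_i)$ is bounded because $A$ is.

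\textbf{Step 3.} To show $A_\mathcal U$ is closed, let $z\in\overline{A_\mathcal U}$ and choose $[a^k_i]\in A_\mathcal U$ with $\Vert z-[a^k_i]\Vert<1/k$. Fix a representative $z=[z_i]$. For each $k$ the set $C_k:=\{i:\Vert z_i-a^k_i\Vert<1/k\}$ belongs to $\mathcal U$. Now diagonalize using the $B_n$ again: put $D_n:=B_n\cap C_1\cap\cdots\cap C_n\in\mathcal U$, which is decreasing with empty intersection. Let $m(i):=\max\{n:i\in D_n\}$ and define $a_i:=a^{m(i)}_i$ when $m(i)\ge1$, and $a_i:=a_0\in A$ arbitrary otherwise. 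For $i\in D_N$ we have $m(i)\ge N$ and $i\in C_{m(i)}$, so $\Vert z_i-a_i\Vert<1/m(i)\le 1/N$. Therefore $[z_i]=[a_i]\in A_\mathcal U$.

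The main obstacle is this diagonal selection in Steps 2 and 3. Countable incompleteness is exactly what lets the index $n(i)$ (respectively $m(i)$) be finite for every $i$ while tending to infinity along $\mathcal U$. Boundedness of $A$ is used only to guarantee that the chosen families belong to $\ell_\infty(I,X)$.
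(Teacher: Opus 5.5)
Your proof is correct and complete. The paper gives no proof of this proposition; it refers to Proposition 1.2.5 of Grelier's thesis. So there is no in-paper argument to compare against. Your argument is the standard way countable incompleteness enters: a decreasing sequence $B_n\in\mathcal U$ with $\bigcap_n B_n=\emptyset$ yields an index function $n(i)<\infty$ that tends to infinity along $\mathcal U$, and this drives the diagonal selection.

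You can save one diagonalization, because Step 2 follows from Step 3. For $x_i\in\overline{A}$, choose for each $k$ elements $a_i^k\in A$ with $\Vert x_i-a_i^k\Vert<1/k$ for all $i$. Then $\Vert [x_i]-[a_i^k]\Vert\leq 1/k$, so $(\overline{A})_\mathcal U\subseteq\overline{A_\mathcal U}$ holds for any ultrafilter, and closedness of $A_\mathcal U$ finishes the argument. This shows that countable incompleteness is needed only for closedness. It is genuinely needed there: for a countably complete $\mathcal U$, $\lim_\mathcal U\Vert x_i-a_i\Vert=0$ forces $x_i=a_i$ on a set in $\mathcal U$. Hence $(\overline{A})_\mathcal U\neq A_\mathcal U$ whenever $A$ is not closed.
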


\subsection{Diameter two properties and their uniform versions}\label{subsect:diametertwo}

In this section we will introduce a bit of notation and necessary preliminary results about the diameter two properties and their uniform versions.

As we have indicated in the Introduction of this manuscript none of the implications SD2P$\Rightarrow$D2P$\Rightarrow$slice-D2P reverse. Indeed, it was first proved in \cite[Theorem 2.4]{blr15eje1} that every Banach space containing $c_0$ admits an equivalent renorming with the slice-D2P but whose unit ball contains non-empty relatively weakly open subsets of arbitrarily small diameter. Similarly, it was proved in \cite[Theorem 2.5]{blr15eje2} that every Banach space containing $c_0$ admits an equivalent renorming with the D2P and whose unit ball contains convex combinations of slices of arbitrarily small diameter. In both results, the next lemma was essential to construct the equivalent renorming, whose statement is written below for future reference in the following sections.

\begin{lemma}\label{lemma:c0inicial}\cite[Lemma 2.3]{blr15eje1}
Let $X$ be a Banach space containing an isomorphic copy of $c_0$. Then there is an equivalent norm $||| \cdot |||$ on $X$ satisfying that $(X,|||\cdot|||)$ contains an isometric copy of $c$ and that for every
$x\in B_{(X,|||\cdot|||)}$ there are sequences $\{x_n\}$, $\{y_n\}\in B_{(X,|||\cdot|||)}$ which are weakly convergent to $x$ and such that $ ||| x_n -y_n ||| =2$ holds for every $n\in \natu$. In fact,
$x_n=x+(1-\alpha_n)e_n$ and $y_n=x-(1+\alpha_n)e_n$ for some
scalars sequence $\{\alpha_n\}$ satisfying that $\vert \alpha_n\vert\leq 1$ holds
for every $n$.
\end{lemma}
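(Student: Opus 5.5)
The plan is to reproduce the construction behind \cite[Lemma 2.3]{blr15eje1}. First I would use the assumed copy of $c_0$ to place an isometric copy of $c$ inside $X$ via an equivalent norm. By James' distortion theorem (or simply starting from an isomorphic copy), we may take a subspace $Y\subseteq X$ and an isomorphism $T:c\to Y$. Since $c$ is isomorphic to $c_0$, such a $Y$ exists. We then define a norm on $Y$ by $|||Tz|||:=\Vert z\Vert_\infty$, which makes $Y$ isometric to $c$. Because $c$ is separable, $\ell_\infty$ is injective, and every separable space embeds isometrically into $\ell_\infty$, the norm $|||\cdot|||$ extends to an equivalent norm on all of $X$. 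Concretely, I would extend the isometric embedding $Y\to\ell_\infty$ to a bounded operator $S:X\to\ell_\infty$ by injectivity, and then set
\[
|||x|||:=\max\{\Vert Sx\Vert_\infty,\ \delta\, d(x)\},
\]
where $d(x)$ is a suitable seminorm that vanishes on $Y$, for instance the quotient norm of $X/Y$, scaled and combined so as to give an equivalent norm. The part needing care is that the maximum structure should make the $\ell_\infty$ coordinate dominate in the directions $e_n$. Here $e_n$ is the image of the $n$-th unit vector of $c_0\subseteq c$.

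The key step is to arrange that adding multiples of $e_n$ to any $x$ in the ball behaves like $\ell_\infty$ with a "vanishing coordinate". I would choose the embedding so that, for every $x\in X$, the $\ell_\infty$ sequence $Sx$ satisfies $\lim_n (Sx)_n$-type control relative to $e_n$. For example, I would insist that $S$ maps $X$ into sequences for which $\{(Sx)(n)\}$ converges, with $(Se_n)(k)=\delta_{nk}$ on the relevant coordinates. Then
\[
|||x+te_n|||=\max\{\ \text{coordinates }k\ne n,\ |(Sx)(n)+t|,\ \ldots\}.
\]
Writing $\alpha_n:=(Sx)(n)$, which satisfies $|\alpha_n|\le1$ when $|||x|||\le1$, the points $x_n=x+(1-\alpha_n)e_n$ and $y_n=x-(1+\alpha_n)e_n$ have $n$-th coordinate $1$ and $-1$ respectively. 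Their other coordinates agree with those of $x$, so they lie in the unit ball. Moreover $|||x_n-y_n|||=|||2e_n|||=2$.

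Weak convergence $x_n,y_n\to x$ follows because $e_n$ is weakly null, being the $c_0$ basis inside $c$, and the coefficients $1\pm\alpha_n$ are bounded.

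The main obstacle is the first step: realizing a norm on all of $X$ in which the coordinate functionals associated with $e_n$ extend to the whole space. Two things must hold simultaneously: the ball must be exactly a sup over these coordinates plus a part insensitive to $e_n$, and the quotient part must not increase when we add $te_n$. I would try first to take $\{e_n^*\}$ as Hahn–Banach extensions of the biorthogonal functionals, pass to a weak$^*$-null-ish bounded block, and define
\[
|||x|||=\max\Bigl\{\sup_n|e_n^*(x)|,\ \Bigl\Vert x-\textstyle\sum_n e_n^*(x)e_n\Bigr\Vert\Bigr\}
\]
after first modifying $e_n^*$ so that the projection-like map is well defined. This uses Sobczyk's theorem, since $c_0$ is complemented in separable superspaces, or a limit along a convergent subsequence in the general case.
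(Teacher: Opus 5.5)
The paper does not prove this lemma; it quotes it from \cite{blr15eje1}, so I assess your argument on its own. The construction in your first paragraph is the right one, and it works for every $X$, separable or not. Take $f_k\in X^*$ to be Hahn--Banach extensions of $e_k^*\circ T^{-1}$. These give your $S=(f_k)_k\colon X\to\ell_\infty$; that is all the injectivity of $\ell_\infty$ amounts to here, and separability plays no role. Let $q\colon X\to X/Y$ be the quotient map and set
\[
|||x|||:=\max\{\Vert Sx\Vert_\infty,\ \Vert q(x)\Vert\}
\]
(your scaling $\delta$ is unnecessary). Then $|||Ty|||=\Vert y\Vert_\infty$ for $y\in c$, $S(x+te_n)=Sx+t\mathbf e_n$ and $q(x+te_n)=q(x)$. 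So with $\alpha_n:=f_n(x)$ your computation showing $x_n,y_n\in B_{(X,|||\cdot|||)}$ and $|||x_n-y_n|||=2$ goes through, and weak convergence follows from $e_n=T\mathbf e_n\to 0$ weakly.

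However, you never verify the only nontrivial point, that $|||\cdot|||$ is equivalent to $\Vert\cdot\Vert$. Instead you call this ``the main obstacle'' and switch to another route. The missing estimate is short. If $|||x|||\le 1$, choose $y\in c$ with $\Vert x-Ty\Vert\le 2$. Then $\Vert y\Vert_\infty=\Vert STy\Vert_\infty\le \Vert Sx\Vert_\infty+\Vert S\Vert\,\Vert x-Ty\Vert\le 1+2\Vert S\Vert$, hence $\Vert x\Vert\le 2+\Vert T\Vert(1+2\Vert S\Vert)$. The bound $|||x|||\le\max\{\Vert S\Vert,1\}\Vert x\Vert$ is obvious.

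Two of your side steps are wrong and should be deleted.

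First, demanding that $Sx$ be a convergent sequence for every $x$ is impossible in general. An extension $S\colon X\to c$ of $T^{-1}$ makes $TS$ a bounded projection of $X$ onto $Y\cong c_0$, which does not exist, for instance, in $X=\ell_\infty$. You never actually use convergence: only $f_k(e_n)=\delta_{kn}$ and $|f_n(x)|\le\Vert Sx\Vert_\infty$ are needed.

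Second, the same objection defeats the norm $\max\{\sup_n|e_n^*(x)|,\Vert x-\sum_n e_n^*(x)e_n\Vert\}$ of your last paragraph. For $x\mapsto\sum_n e_n^*(x)e_n$ to be defined on all of $X$, it must be a bounded projection onto $\overline{\spann}\{e_n\}\cong c_0$. Sobczyk's theorem covers only separable $X$, and no subsequence trick helps in $\ell_\infty$. Moreover, once the $e_n^*$ are modified to be weak$^*$-null, they no longer return the coordinates of vectors such as $T\1$ (whose coordinates are all $1$). So this norm need not restrict to the sup norm on $T(c)$, and the isometric copy of $c$ is lost.
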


Let us now pass to providing a precise description of the characterisation of the uniform slice-D2P of \cite[Corollary 3.4]{rz25} because we will make use of it in Section~\ref{section:prooftheoremunislice}. In order to do so, let us borrow a bit of notation from \cite[Section 2]{rz25}: given a Banach space $X$ and $\alpha>0$, define
$$S^\alpha(X):=\left\{\frac{x+y}{2}: x,y\in B_X, \Vert x-y\Vert\geq  \alpha\right\}.$$
Given $n\in\mathbb N$ we denote
$$S_n^\alpha(X):=\conv_n(S^\alpha(X)).$$
Finally, given $n\in\mathbb N$ and $\alpha>0$, we define
$$C_n^\alpha(X):=\sup_{x\in S_X} d(x,S_n^\alpha(X))=\sup_{x\in S_X}\inf_{y\in S_n^\alpha(X)} \Vert x-y\Vert.$$
From the very definition of $C_n^\alpha(X)$ the following two properties follow:
\begin{enumerate}
    \item Given $0<\alpha<\beta$ then $C_n^\alpha\geq C_n^\beta$ and,
    \item given two natural numbers $n\geq m$ then $C_n^\alpha(X)\leq C_m^\alpha(X)$.
\end{enumerate}

Now the desired characterisation of the uniform slice-D2P is the following.

\begin{theorem}\label{theo:charunislice}\cite[Corollary 3.4]{rz25}
Let $X$ be a Banach space. The following are equivalent:
\begin{enumerate}
    \item $(X)_\mathcal U$ has the slice-D2P for every free ultrafilter $\mathcal U$ over $\mathbb N$.
    \item For every $0<\alpha<2$, $\lim_{n\rightarrow \infty} C_n^\alpha(X)=0$.
\end{enumerate}
\end{theorem}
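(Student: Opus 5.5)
The natural route is to read both conditions through Ivakhno's characterisation \cite[Lemma 1]{iva06}, applied to the ultrapower. Then the whole matter reduces to comparing $S^\alpha(X_\mathcal U)$ with the ultrapower of the sets $S_n^\alpha(X)$. Two elementary observations will be used repeatedly. First, $B_{X_\mathcal U}=(B_X)_\mathcal U$, with representatives that can be taken in $B_X$ coordinatewise. Second, a convex combination of at most $n$ elements with coefficients $\lambda_k^i$ depending on $i$ passes to the ultrapower, because $\lambda_k:=\lim_\mathcal U\lambda_k^i$ exists and $\sum_k\lambda_k=1$.

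\emph{(2)$\Rightarrow$(1).} Fix a free ultrafilter $\mathcal U$ over $\mathbb N$, $0<\alpha<2$ and $\varepsilon>0$, and choose $n$ with $C_n^\alpha(X)<\varepsilon$.
\begin{itemize}
\item Take $[x_i]\in S_{X_\mathcal U}$. We may assume $x_i\in S_X$ for all $i$, since the normalised sequence defines the same class. Pick $y_i\in S_n^\alpha(X)$ with $\Vert x_i-y_i\Vert<\varepsilon$, written as
$$y_i=\sum_{k=1}^n\lambda_k^i\frac{a_k^i+b_k^i}{2},\qquad a_k^i,b_k^i\in B_X,\ \Vert a_k^i-b_k^i\Vert\geq\alpha.$$
\item Then $[a_k^i],[b_k^i]\in B_{X_\mathcal U}$ and $\Vert[a_k^i]-[b_k^i]\Vert\geq\alpha$.
\item Passing to the limits $\lambda_k=\lim_\mathcal U\lambda_k^i$, we get $[y_i]=\sum_k\lambda_k\frac{[a_k^i]+[b_k^i]}{2}\in S_n^\alpha(X_\mathcal U)$ and $\Vert[x_i]-[y_i]\Vert\leq\varepsilon$.
\end{itemize}
Since $\varepsilon$ is arbitrary, $S_{X_\mathcal U}\subseteq\overline{\conv}\,S^\alpha(X_\mathcal U)$. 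As $X_\mathcal U$ is infinite-dimensional (for $X\neq 0$, hypothesis (2) forces this), $B_{X_\mathcal U}=\conv(S_{X_\mathcal U})$. Hence $B_{X_\mathcal U}=\overline{\conv}\,S^\alpha(X_\mathcal U)$ for every $\alpha<2$, and Ivakhno's lemma yields the slice-D2P of $X_\mathcal U$.

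\emph{(1)$\Rightarrow$(2).} I would argue by contraposition. By property (2) of $C_n^\alpha$ the sequence $C_n^\alpha(X)$ is nonincreasing in $n$. So if (2) fails there are $0<\alpha<2$ and $\delta>0$ with $C_n^\alpha(X)>\delta$ for every $n$. Choose $x_n\in S_X$ with $d(x_n,S_n^\alpha(X))>\delta$, fix any free $\mathcal U$ over $\mathbb N$ and $\beta\in(\alpha,2)$. I will show that $[x_n]\notin\overline{\conv}\,S^\beta(X_\mathcal U)$, which contradicts Ivakhno's lemma for $X_\mathcal U$ with $\varepsilon=2-\beta$.

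Suppose instead that $z=\sum_{k=1}^m\lambda_k\frac{u_k+v_k}{2}$ with $u_k,v_k\in B_{X_\mathcal U}$, $\Vert u_k-v_k\Vert\geq\beta$, and $\Vert[x_n]-z\Vert<\delta$. Write $u_k=[u_k^n]$ and $v_k=[v_k^n]$ with representatives in $B_X$. The set
$$A=\{n:\ \Vert u_k^n-v_k^n\Vert>\alpha\ \forall k\leq m,\ \Vert x_n-z_n\Vert<\delta\},\qquad z_n:=\sum_k\lambda_k\frac{u_k^n+v_k^n}{2},$$
belongs to $\mathcal U$, hence is infinite because $\mathcal U$ is free. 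For $n\in A$ with $n\geq m$ we have $z_n\in S_m^\alpha(X)\subseteq S_n^\alpha(X)$ and $\Vert x_n-z_n\Vert<\delta$, contradicting the choice of $x_n$.

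The main obstacle is the bookkeeping of strict versus non-strict inequalities. This is why one passes from $\beta$ in the ultrapower to $\alpha<\beta$ in the coordinates, since a limit of quantities $\geq\beta$ only gives $>\alpha$ on a set in $\mathcal U$. One must also make sure that Ivakhno's closed convex hull can be tested by finite convex combinations, which holds by definition of closure and only costs a small loss in $\delta$. A secondary care point is the direction of monotonicity in $n$: the inclusion $S_m^\alpha\subseteq S_n^\alpha$ for $m\leq n$ is exactly what makes a single fixed $m$ defeat the whole tail of the sequence $(x_n)$.
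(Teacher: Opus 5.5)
Your argument is correct and follows the route the paper points to: the paper does not reprove this result but imports it from \cite[Corollary 3.4]{rz25}, noting that it rests on Ivakhno's characterisation. Transferring that characterisation through the ultrapower is exactly what you do, using bounded-length convex combinations and the slack $\beta>\alpha$ to handle the strict versus non-strict inequalities. One harmless simplification: you do not need $X_\mathcal U$ to be infinite-dimensional, since $B_Y=\conv(S_Y)$ holds in every nonzero normed space $Y$.
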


Taking into account the thesis of both Theorems~\ref{theo:maintheodiffslid2punif} and \ref{theo:maintheodiffd2punif}, let us conclude this subsection showing that if a Banach space $X$ satisfies that $B_X$ contains slices (resp. non-empty relatively weakly open subsets, convex combinations of slices) of diameter $\leq \delta_0$, then so does the unit ball of $X_\mathcal U$ for any free ultrafilter $\mathcal U$ over any infinite set $I$.

\begin{proposition}\label{prop:smallultrapower}
Let $X$ be a Banach space, $I$ be an infinite set and $\mathcal U$ be a free ultrafilter over $I$. Let $\delta_0>0$. Then:
\begin{enumerate}
\item If $B_X$ contains a slice of diameter $\leq \delta_0$, then $B_{X_\mathcal U}$ contains a slice of diameter $\leq \delta_0$.
\item If $B_X$ contains a non-empty relatively weakly open subset of diameter $\leq \delta_0$, then $B_{X_\mathcal U}$ contains a non-empty relatively weakly open subset of diameter $\leq \delta_0$.
\item If $B_X$ contains a convex combination of slices of diameter $\leq \delta_0$, then $B_{X_\mathcal U}$ contains a convex combination of slices of diameter $\leq \delta_0$.
\end{enumerate}
\end{proposition}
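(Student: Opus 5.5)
The idea is to transport small sets from $B_X$ to $B_{X_\mathcal U}$ by lifting functionals through the canonical map. Given $f\in X^*$, define $\hat f\in (X_\mathcal U)^*$ by $\hat f([x_i]):=\lim_\mathcal U f(x_i)$. This is well defined because the limit vanishes on $c_{0,\mathcal U}$ and $f(B_X)$ is bounded. It is linear, satisfies $\Vert \hat f\Vert=\Vert f\Vert$, and $\hat f(j(x))=f(x)$. Since $B_{X_\mathcal U}=(B_X)_\mathcal U$, we get $\sup \hat f(B_{X_\mathcal U})=\sup f(B_X)$: the bound $\leq$ holds because each $\lim_\mathcal U f(x_i)\leq \sup f(B_X)$, and the bound $\geq$ holds using constant sequences.

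\emph{Slices.} Let $S=S(B_X,f,\alpha)$ have diameter $\leq\delta_0$, and consider $\hat S:=S(B_{X_\mathcal U},\hat f,\alpha)$, which is nonempty since it contains $j(S)$. Take $[x_i],[y_i]\in \hat S$ with representatives in $B_X$. Then $\lim_\mathcal U f(x_i)>\sup f(B_X)-\alpha$, so the set $\{i: x_i\in S\}$ belongs to $\mathcal U$, and the same holds for the $y_i$. Intersecting these two sets, we get $\Vert x_i-y_i\Vert\leq\delta_0$ on a set in $\mathcal U$, hence $\Vert[x_i]-[y_i]\Vert\leq\delta_0$.

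\emph{Weakly open sets.} Here I would use the standard fact that every nonempty relatively weakly open subset of $B_X$ contains a nonempty set of the form $W=\{x\in B_X: |f_k(x-x_0)|<\varepsilon,\ k=1,\dots,n\}$ with $x_0\in B_X$. If the original set has diameter $\leq\delta_0$, then so does $W$. Set $\hat W:=\{z\in B_{X_\mathcal U}: |\hat f_k(z-j(x_0))|<\varepsilon\ \forall k\}$. This set is relatively weakly open and contains $j(x_0)$. For $[x_i]\in\hat W$, each of the finitely many strict inequalities holds on a set in $\mathcal U$, and the intersection of these sets is still in $\mathcal U$, so $x_i\in W$ for $\mathcal U$-many $i$. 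The diameter estimate then follows exactly as in the slice case.

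\emph{Convex combinations of slices.} This is the main point to check. Let $C=\sum_{k=1}^n\lambda_k S(B_X,f_k,\alpha_k)$ have diameter $\leq\delta_0$, and put $\hat C:=\sum_k\lambda_k \hat S_k$ with $\hat S_k=S(B_{X_\mathcal U},\hat f_k,\alpha_k)$. An element of $\hat C$ has the form $\sum_k\lambda_k[x^k_i]$ with $[x^k_i]\in\hat S_k$ and all $x^k_i\in B_X$. As above, $A_k:=\{i: x^k_i\in S_k\}\in\mathcal U$. Let $B$ be the intersection of these $A_k$ together with the corresponding sets for a second element $\sum_k\lambda_k[y^k_i]$. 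Then $B\in\mathcal U$, and for $i\in B$ both $\sum_k\lambda_kx^k_i$ and $\sum_k\lambda_k y^k_i$ lie in $C$. Hence their distance is $\leq\delta_0$ for $i\in B$, and passing to the $\mathcal U$-limit gives $\Vert\cdot\Vert\leq\delta_0$. The only care needed throughout is to choose representatives inside $B_X$, which the remark preceding Proposition~\ref{prop:ultraunion} allows. None of the arguments uses countable incompleteness, so the result holds for any free ultrafilter over any infinite set $I$.
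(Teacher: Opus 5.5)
Your proposal is correct and follows essentially the paper's argument. You lift each functional via $\hat f([x_i])=\lim_\mathcal U f(x_i)$, take representatives in $B_X$, note that the indices where they fall in the original small set form a member of $\mathcal U$, and pass to the $\mathcal U$-limit of the distances. The differences are cosmetic: the paper writes out only case (2), using finite intersections of slices with a common $\alpha$ (via Ivakhno's lemma), and calls (1) and (3) analogous, whereas you use basic weak neighbourhoods $\{x\in B_X:|f_k(x-x_0)|<\varepsilon\}$ and treat all three cases explicitly.
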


\begin{proof}
Let us prove (2), being the proof of (3) completely similar and the proof of (1) a particular case of (2) (just taking $n=1$).

So assume that there exists $x\in B_X$ and a relatively weakly open subset $W\subseteq B_X$ with $x\in W$ and $\diam(W)\leq \delta_0$. Since slices of $B_X$ are a subbasis of the weak topology of $B_X$ we can assume with no loss of generality that
$$W:=\bigcap\limits_{j=1}^n S(B_X,f_j,\alpha)$$
where $f_1,\ldots, f_n\in S_{X^*}$ and $\alpha>0$ (observe that a common $\alpha>0$ can be taken with no loss of generality thanks to \cite[Lemma 1.4]{ivakhno04}).

Let us construct a weakly open set in $X_\mathcal U$. In order to do so, given $1\leq j\leq n$  consider the functional $\varphi_j([x_i]):=\lim_\mathcal U f_j(x_i)$, and observe that $\varphi_j\in S_{(X_\mathcal U)^*}$. Moreover, observe that
$$\varphi_j([x]):=\lim_{\mathcal U} f_j(x)>1-\alpha$$
since $x\in W$. This proves that
$$V:=\bigcap\limits_{j=1}^n S(B_{X_\mathcal U},\varphi_i,\alpha)$$
is a non-empty weakly open subset of $B_{X_\mathcal U}$. Let us estimate the diameter of $V$. In order to do so, select $\varepsilon>0$ and pick $[x_i],[y_i]\in V$. Observe that we can assume, up to a choice of a different representative, that $x_i\in B_X$ and $y_i\in B_X$ holds for every $i\in I$ thanks to \cite[Remark 2.4]{mr26uni}. Moreover, given $j\in \{1,\ldots, n\}$, we have that
$$\varphi_j([x_i])=\lim_\mathcal U f(x_i)>1-\alpha,$$
so the set
$$A_j:=\{i\in I: f_j(x_i)>1-\alpha\}$$
belongs to $\mathcal U$. Similarly, the set
$$B_j:=\{i\in I: f_j(y_i)>1-\alpha\}\in\mathcal U.$$
Moreover, define the set
$$C:=\{i\in I: \vert \Vert [x_i]-[y_i]\Vert-\Vert x_i-y_i\Vert\vert<\varepsilon\}\in\mathcal U.$$
Finally, the set $D:=\bigcap\limits_{j=1}^n A_j\cap \bigcap\limits_{j=1}^n B_j\cap C\in\mathcal U$ and, consequently, it is non-empty. Now, given $i\in D$ we have that, for every $j\in\{1,\ldots, n\}$, $f(x_i)>1-\alpha$ and $f(y_i)>1-\alpha$ holds, so $x_i,y_i\in W$. Consequently
$$\Vert x_i-y_i\Vert\leq \diam(W)\leq \delta_0.$$
Finally using that $i\in C$ we have that
$$\Vert [x_i]-[y_i]\Vert\leq \Vert x_i-y_i\Vert+\varepsilon\leq \delta_0+\varepsilon.$$
Since $[x_i],[y_i]\in V$ were arbitrary we conclude that $\diam(V)\leq \delta_0+\varepsilon$ and, since $\varepsilon>0$ was arbitrary, we get $\diam(V)\leq \delta_0$, as desired.
\end{proof}

\section{The set STS}\label{section:STS}

Let us recall in this section the construction of the set $STS$ from \cite{aor88}, which is essential in the proof of Theorem~\ref{theo:maintheodiffslid2punif}. 

Let us denote by $\mathcal T$ the infinite branching tree in its usual order, that is, the set of all finite sequences of positive integers, together with the empty sequence $\emptyset\in\mathcal T$, which denotes the origin of the tree. If $\alpha=(\alpha_1,\ldots, \alpha_n)\in \mathcal T$, $\vert\alpha\vert:=n$ is the length of $\alpha$. There is a natural order in $\mathcal T$: given $\alpha=(\alpha_1,\ldots, \alpha_n),\beta=(\beta_1,\ldots, \beta_m)\in\mathcal T$, then $\alpha\leq \beta$ if $\vert\alpha\vert\leq \vert\beta\vert$ and $\alpha_i=\beta_i$ holds for $1\leq i\leq \vert \alpha\vert$. Moreover, we declare $\emptyset \leq (n)$ holds for every $n\in\mathbb N$. Given $\alpha=(\alpha_1,\ldots, \alpha_n)$ and $\beta=(\beta_1,\ldots, \beta_m)$ we denote the concatenation of $\alpha$ and $\beta$ by $\alpha\wedge \beta:=(\alpha_1,\ldots,\alpha_n,\beta_1,\ldots,\beta_m)$, and if $k\in\mathbb N$, we denote by $\alpha\wedge k:=\alpha\wedge (k)$ for simplicity. For any $\alpha\in\mathcal T$ we denote by $S(\alpha)$ the set of all inmediate sucessors of $\alpha$, that is, the set
$$S(\alpha):=\{\alpha\wedge n: n\in\mathbb N\}.$$
Now it is time to introduce the desired set

\begin{definition}\label{defi:STS}
$STS$ is the set of all $x\in c_0(\mathcal T)$ satisfying:
\begin{enumerate}
    \item $x\geq 0\ \forall x\in STS$,
    \item $x(\emptyset)=1$ and,
    \item $\sum_{\beta\in S(\alpha)} x(\beta)\leq x(\alpha)$.
\end{enumerate}
\end{definition}

As we have already mentioned, the construction of the set $STS$ goes back to \cite{aor88}, where the set $STS$ is presented as an example of a subset of $c_0$ which fails the \textit{point of continuity property (PCP)} but enjoys the \textit{convex point of continuity property (CPCP)}. Many (impressive) properties of the set $STS$ are exhibited in \cite[Theorem 1.1]{aor88}. 

Since all slices of $STS$ have diameter $1=\diam(STS)$ and, at the same time, the set $STS$ contains non-empty relatively weakly open subsets of arbitrarily small diameter (a direct consequence of the CPCP), the set $STS$ was used in \cite[Theorem 2.4]{blr15eje1} in order to prove that every Banach space containing $c_0$ admits an equivalent renorming with the slice-D2P but such that the unit ball contains non-empty relatively weakly open subsets of arbitrarily small diameter (in particular, solving the open question whether the slice-D2P and the D2P are equivalent). This idea was also exploited in \cite[Theorem 1.3]{taller} in order to construct an equivalent renorming of $L_\infty([0,1])$ such that the unit ball contains non-empty relatively weakly open subsets of arbitrarily small diameter but such that the set of Daugavet and super $\Delta$-points is as big as possible.

Going back to the set $STS$, let us distinguish some special elements inside: given $\alpha\in\mathcal T$ we denote by $b_\alpha:=\sum_{\beta\leq \alpha} e_\alpha$. It follows that $STS=\overline{\conv}(b_\alpha: \alpha\in\mathcal T\}$. Indeed, let us generalise the above fact in the following proposition, which will be the key to proving the uniform slice-D2P in Theorem~\ref{theo:maintheodiffslid2punif}.

\begin{proposition}\label{prop:aprouniSTS}
Let $x\in STS$ and $n\geq 2$. Then there exists $z\in \conv_n(\{b_\alpha: \alpha\in\mathcal T\})$ such that
$$\Vert z-x\Vert\leq \frac{3}{n}.$$
\end{proposition}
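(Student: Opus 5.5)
The plan is to build $z$ in two stages: first approximate $x$ by a \emph{finite} convex combination of the $b_\alpha$ (I read $b_\alpha=\sum_{\beta\leq\alpha}e_\beta$), then compress it to at most $n$ terms by a rounding argument. Throughout, the norm is the supremum norm of $c_0(\mathcal T)$.

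\textbf{Step 1 (truncation).} Condition (3) of Definition~\ref{defi:STS} together with $x\geq 0$ gives $x(\beta)\leq x(\alpha)$ whenever $\alpha\leq\beta$. Hence
$$F:=\{\alpha\in\mathcal T: x(\alpha)\geq 1/n\}$$
is downward closed. It is finite because $x\in c_0(\mathcal T)$, and it contains $\emptyset$ because $x(\emptyset)=1$. For $\alpha\in F$ set
$$w(\alpha):=x(\alpha)-\sum_{\beta\in S(\alpha)\cap F}x(\beta)\geq 0 .$$
A telescoping argument over the finite subtree $F$ gives, for every $\gamma\in F$,
$$\sum_{\alpha\in F,\ \alpha\geq\gamma}w(\alpha)=x(\gamma).$$
In particular $\sum_{\alpha\in F}w(\alpha)=x(\emptyset)=1$. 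So $y:=\sum_{\alpha\in F}w(\alpha)b_\alpha$ is a convex combination with $y(\gamma)=x(\gamma)$ for $\gamma\in F$ and $y(\gamma)=0$ for $\gamma\notin F$. Therefore $\Vert x-y\Vert\leq\sup_{\gamma\notin F}x(\gamma)\leq 1/n$.

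\textbf{Step 2 (compression to $n$ terms).} Enumerate $F$ in depth-first preorder as $\alpha_1,\ldots,\alpha_N$, and lay the weights consecutively on $[0,1]$. That is, take $I_j=[s_{j-1},s_j)$ with $s_j=\sum_{i\leq j}w(\alpha_i)$. The key combinatorial fact is that in preorder the descendants of any $\gamma\in F$ form a contiguous block of indices. Hence $J_\gamma:=\bigcup_{\alpha_j\geq\gamma}I_j$ is an interval of length exactly $x(\gamma)$.

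Now take the $n$ points $t_k=(k-\tfrac12)/n$, $k=1,\ldots,n$. Let $\alpha^{(k)}$ be the node whose interval $I_j$ contains $t_k$; points landing in zero-length intervals cause no issue. Define
$$z:=\frac1n\sum_{k=1}^n b_{\alpha^{(k)}}\in\conv_n(\{b_\alpha:\alpha\in\mathcal T\}).$$
For $\gamma\in F$ we get $z(\gamma)=\frac1n\#\{k: t_k\in J_\gamma\}$. An interval of length $\ell$ contains between $n\ell-1$ and $n\ell+1$ of the equally spaced points $t_k$, so $|z(\gamma)-y(\gamma)|\leq 1/n$. For $\gamma\notin F$ we have $z(\gamma)=0=y(\gamma)$.

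\textbf{Conclusion.} Combining the two steps,
$$\Vert z-x\Vert\leq\Vert z-y\Vert+\Vert y-x\Vert\leq \frac2n\leq\frac3n .$$

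\textbf{Main obstacle.} I expect the delicate point to be the bookkeeping in Step 2: checking carefully that preorder makes each subtree an interval of the right length, including the half-open endpoint conventions. Then the uniform bound $|\#(\text{points in an interval of length }\ell)-n\ell|\leq1$ applies simultaneously to all $\gamma$. If that counting is slightly off, for instance at the endpoints or when $x(\emptyset)$ forces the root's interval to be all of $[0,1]$, the slack between $2/n$ and $3/n$ should absorb it.
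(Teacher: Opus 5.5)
Your argument is correct, and it takes a genuinely different route from the paper's. The paper never writes the truncation of $x$ exactly as a convex combination. Instead it builds coefficients $\lambda_\alpha\in\frac{1}{n}(\mathbb{N}\cup\{0\})$ by backward induction on the levels of $F$: each node takes the largest multiple of $\frac1n$ not exceeding its residual mass $x(\alpha)-\sum_{F\ni\beta>\alpha}\lambda_\beta$. This gives $\Vert x-\sum_{\alpha\in F}\lambda_\alpha b_\alpha\Vert\leq\frac1n$ with at most $n$ nonzero coefficients, since each nonzero one is at least $\frac1n$ and they sum to some $\Lambda\leq 1$. However, $\Lambda$ is only known to lie in $(1-\frac1n,1]$, so the paper must renormalise. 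That step costs $\frac{1-\Lambda}{\Lambda}<\frac{1}{n-1}\leq\frac2n$, which is where the constant $3$ comes from.

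You instead first obtain an exact convex representation of the truncation, with weights $w(\alpha)$, using that $F$ is downward closed. You then round all subtree masses $x(\gamma)$, $\gamma\in F$, simultaneously by systematic sampling along a preorder layout. Because the total mass stays exactly $1$, no renormalisation is needed. The price is the combinatorial fact that subtrees are contiguous in preorder, which is standard and which you identify correctly; the paper needs only a level-by-level induction. The gain is a better constant and a more rigid conclusion: $z$ is an equal-weight average of $n$ (possibly repeated) vectors $b_\alpha$.

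In fact you can drop the triangle inequality. Since $y=x$ on $F$ and $z=0$ off $F$, you have $\vert z(\gamma)-x(\gamma)\vert<\frac1n$ for $\gamma\in F$ and $\vert z(\gamma)-x(\gamma)\vert=x(\gamma)<\frac1n$ for $\gamma\notin F$, so $\Vert z-x\Vert\leq\frac1n$. Your closing hedge is also unnecessary. Each $J_\gamma$ is a half-open subinterval of $[0,1)$, so the number of points $t_k$ it contains differs from $n\,x(\gamma)$ by less than $1$, and there are no endpoint issues.
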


\begin{proof}
Since $x\in c_0(\mathcal T)$ the set
$$F:=\left\{\alpha\in \mathcal T: x(t)\geq \frac{1}{n} \right\}$$
is finite. Let $L:=\max\{\vert \alpha\vert: \alpha\in F\}$. 

Let us construct, for every $\alpha\in F$, an element $\lambda_\alpha\in [0,1]$ of the form $\lambda_\alpha=\frac{k_\alpha}{n}$, where $k_\alpha\in\mathbb N\cup\{0\}$, such that
\begin{equation}\label{eq:aproSTS}
0\leq x(\alpha)-\sum_{F\ni \beta\geq \alpha}\lambda_\alpha<\frac{1}{n}\ \forall \alpha\in F.\end{equation}
The construction will be done by induction on $\vert \alpha\vert$.

Given $\alpha\in F$ with $\vert \alpha\vert=L$, then $\alpha\wedge n\notin F\ \forall n\in\mathbb N$ since $L$ is the maximal level possible for an element in $F$. 

Now define $\lambda_\alpha:=\frac{k_\alpha}{n}$, where $k_\alpha\in\mathbb N\cup\{0\}$ is such that $\frac{k_\alpha}{n}\leq x(\alpha)<\frac{k_\alpha+1}{n}$. Clearly $\lambda_\alpha$ satisfies the requirements.

Now assume that $0\leq k_0<L$ and that $\lambda_\alpha=\frac{k_\alpha}{n}$ has been constructed for every $\alpha\in F$ with $\vert \alpha\vert>k_0$. Let us construct $\lambda_\alpha$ for every $\alpha\in F$ such that $\vert \alpha\vert=k_0$. Given one such $\alpha$ we have two possibilities:\vspace{0.3cm}

\textbf{Case 1:} $\alpha\wedge n\notin F$ for every $n\in\mathbb N$. In this case we define $\lambda_\alpha$ as in the above step. \vspace{0.3cm}

\textbf{Case 2:} If $\alpha\wedge n_0\in F$ for some $n_0$, then $\lambda_\beta$ has been constructed for every $\beta\in F, \beta>\alpha$. Observe that 
$$\sum_{F\ni \beta>\alpha}\lambda_\beta=\sum_{\beta\in S(\alpha)\cap F}\sum_{F\ni t\geq \beta}\lambda_t\leq \sum_{\beta\in S(\alpha)\cap F}x(\beta)\leq \sum_{\beta\in S(\alpha)}x(\beta)\leq x(\alpha),$$
where the first inequality follows by the inductive step for any successor of $\alpha$ which belongs to $F$, whereas the second inequality is obvious because $x$ is positive and the last one follows by the definition of $STS$.

Now, we have two possibilities:
\begin{itemize}
    \item If $0\leq x(\alpha)-\sum_{F\ni \beta>\alpha}\lambda_\beta<\frac{1}{n}$ define $\lambda_\alpha=0$, and clearly \eqref{eq:aproSTS} holds.
    \item If $\frac{1}{n}\leq x(\alpha)-\sum_{F\ni \beta>\alpha}\lambda_\beta$ then define $\lambda_\alpha:=\frac{k_\alpha}{n}$, where $k_\alpha\in\mathbb N$ is such that
    $$\frac{k_\alpha}{n}\leq x(\alpha)-\sum_{F\ni \beta>\alpha}\lambda_\beta<\frac{k_\alpha+1}{n}.$$
\end{itemize}
It remains to prove \eqref{eq:aproSTS}. On the one hand
$$x(\alpha)-\sum_{F\ni \beta\geq \alpha}\lambda_\beta=x(\alpha)-\sum_{F\ni \beta> \alpha}\lambda_\beta-\frac{k_\alpha}{n}\geq 0.$$
On the other hand
$$x(\alpha)-\sum_{F\ni \beta\geq \alpha}\lambda_\beta=x(\alpha)-\sum_{F\ni \beta>\alpha}\lambda_\beta-\frac{k_\alpha}{n}=x(\alpha)-\sum_{F\ni \beta>\alpha}\lambda_\beta-\frac{k_\alpha+1}{n}+\frac{1}{n}<\frac{1}{n}.$$
This completes the inductive process.

Now set $y:=\sum_{\alpha\in F}\lambda_\alpha b_\alpha$. Let us first prove that $\Vert y-x\Vert=\sup_{\alpha\in\mathcal T}\vert y(\alpha)-x(\alpha)\vert<\frac{1}{n}$. Given $\alpha\in \mathcal T$ we have two possibilities:
\begin{enumerate}
    \item If $\alpha\in F$ then we have
    $$x(\alpha)-y(\alpha)=x(\alpha)-\sum_{\beta\in F}\lambda_\beta b_\beta(\alpha)=x(\alpha)-\sum_{F\ni \beta\geq \alpha}\lambda_\beta$$
    which belongs to $[0,\frac{1}{n})$ by construction, so $\vert x(\alpha)-y(\alpha)\vert<\frac{1}{n}$. 

    \item If $\alpha\notin F$ then $y(\alpha)=0$. Consequently
    $$\vert x(\alpha)-y(\alpha)\vert=x(\alpha)<\frac{1}{n}$$
    as $F:=\{t\in \mathcal T: x(t)\geq \frac{1}{n}\}$. 
\end{enumerate}
The above discussion proves that $\Vert y-x\Vert\leq \frac{1}{n}$.

Now recall that for every $\alpha\in F$ we have $\lambda_\alpha=\frac{k_\alpha}{n}$, for some $k_\alpha\in\mathbb N\cup\{0\}$. Now set
$$P:=\{\alpha\in F: k_\alpha>0\}.$$
By an application of \eqref{eq:aproSTS} to $\alpha=\emptyset$ we get
$$0<1-\sum_{\alpha\in F} \lambda_\alpha=1-\sum_{\alpha\in P}\lambda_\alpha<\frac{1}{n}.$$
Now
$$1\geq \sum_{\alpha\in P}\lambda_\alpha=\sum_{\alpha\in P}\frac{k_\alpha}{n}\geq \frac{1}{n}\sum_{\alpha\in P}1=\frac{\vert P\vert}{n},$$
so $\vert P\vert\leq n$. Now define $\Lambda:=\sum_{\alpha\in F}\lambda_\alpha$, which satisfies $1-\frac{1}{n}<\Lambda\leq 1$. Finally set $z:=\frac{y}{\Lambda}=\sum_{\alpha\in P} \frac{\lambda_\alpha}{\Lambda}b_\alpha\in \conv_{\vert P\vert}(\{b_\alpha: \alpha\in\mathcal T\})\subseteq \conv_{n}(\{b_\alpha: \alpha\in\mathcal T\})$. Moreover
\[\begin{split}
\Vert x-z\Vert& \leq \Vert x-y\Vert+\Vert y-z\Vert\leq \frac{1}{n}+\left\vert 1-\frac{1}{\Lambda}\right\vert \Vert y\Vert\\
 & \leq  \frac{1}{n}+\frac{1-\Lambda}{\Lambda}<\frac{1}{n}+\frac{\frac{1}{n}}{1-\frac{1}{n}}\leq \frac{3}{n},
\end{split}\]
which finishes the proof.
\end{proof}

\section{Proof of Theorem~\ref{theo:maintheodiffslid2punif}}\label{section:prooftheoremunislice}

Let $X$ be a Banach space containing $c_0(\mathbb N)=c_0(\mathcal T)$ (this follows since $\mathcal T$ is clearly countable). Up to an application of  Lemma~\ref{lemma:c0inicial} and an isometric identification of $c(\mathbb N)$ and $c(\mathcal T):=c_0(\mathcal T)\oplus \mathbb R\1\subseteq \ell_\infty(\mathcal T)$, we can assume that $X$ contains an isometric copy of $c(\mathcal T)$ and satisfies that, given $x\in B_X$ and $\alpha\in\mathcal T$, there exists $a_\alpha\in\mathbb R$ such that $\vert a_\alpha\vert\leq 1$ and such that
$$x+(1-a_\alpha)e_\alpha\mbox{ and }x-(1+a_\alpha)e_\alpha$$
belong to $B_X$ for every $\alpha\in\mathcal T$. Let $\varepsilon>0$ and consider the equivalent norm $\Vert\cdot\Vert_\varepsilon$ on $X$ whose unit ball is 
$$B_\varepsilon:=\overline{\co}(A\cup -A\cup ((1-\varepsilon)B_X+\varepsilon B_{c_0}),$$
where $A:=2STS-\1$. Observe that, since $STS=\overline{\co}(\{b_\alpha: \alpha\in\mathcal T\})$, the above renorming is the renorming considered in \cite[Theorem 2.4]{blr15eje1}, which is proved there to enjoy the slice-D2P but such that the unit ball contains non-empty relatively weakly open subsets of arbitrarily small diameter. Our aim is to prove that $(X,\Vert\cdot\Vert_\varepsilon)$ has the uniform slice-D2P. In order to do so, our approach will be different from that of \cite[Theorem 2.4]{blr15eje1}, where it was used that any slice of $B_\varepsilon$ must intersect $A\cup -A\cup ((1-\varepsilon)B_X+\varepsilon B_{c_0})$. We will look for an application of Theorem~\ref{theo:charunislice}. 

Call $B:=((1-\varepsilon)B_X+\varepsilon B_{c_0})$. Since $\frac{A-A}{2}\subseteq B$, we obtain by \cite[Lemma 2.4]{blr15eje2} that 
$$\conv(A\cup -A\cup B)=\conv(A\cup B)\cup \conv(-A\cup B).$$
As a consequence, we obtain that $\conv(A\cup B)\cup \conv(-A\cup B)$ is dense in $B_\varepsilon$. Since $B_{c_{00}}$ is dense in $B_{c_0}$ we then infer that
$$\conv(A\cup ((1-\varepsilon)B_X+\varepsilon B_{c_{00}}))\cup \conv(-A\cup ((1-\varepsilon)B_X+\varepsilon B_{c_{00}}))$$
is dense in $B_\varepsilon$. 

We will make use of this fact. In order to prove the uniform slice-D2P, select any $\delta>0$ and any $n\in\mathbb N$ with $n\geq 2$. Select any $w\in \conv(A\cup ((1-\varepsilon)B_X+\varepsilon B_{c_{00}}))\cup \conv(-A\cup ((1-\varepsilon)B_X+\varepsilon B_{c_{00}}))$, which is dense in $B_\varepsilon$. With no loss of generality we can assume that $w\in \conv(A\cup ((1-\varepsilon)B_X+\varepsilon B_{c_{00}}))$, so we can write
$$w=\lambda (2\varphi-\1)+(1-\lambda)((1-\varepsilon)x+\varepsilon y),$$
where $\lambda\in [0,1]$, $\varphi\in STS$, $x\in B_X$ and $y\in B_{c_{00}}$. In virtue of Proposition~\ref{prop:aprouniSTS} we can find $\lambda_1,\ldots, \lambda_n\in [0,1]$ such that $\sum_{j=1}^n \lambda_j=1$ and $\alpha_1,\ldots, \alpha_n\in \mathcal T$ such that
$$\left\Vert \varphi-\sum_{j=1}^n \lambda_j b_{\alpha_j} \right\Vert<\frac{3}{n}.$$
Now, given $1\leq j\leq n$, define $w_j:=\lambda(2b_{\alpha_j}-\1)+(1-\lambda)((1-\varepsilon)x+\varepsilon y)$, and observe that
\[
\begin{split}
\left\Vert w-\sum_{j=1}^n \lambda_j w_j\right\Vert & =\left\Vert \lambda\left(2\left(\varphi-\sum_{j=1}^n \lambda_j b_{\alpha_j} \right)-\left(\1-\sum_{j=1}^n \lambda_j\1 \right)\right) \right. \\ 
& \left. +(1-\lambda)\left((1-\varepsilon)\left( x-\sum_{j=1}^n \lambda_j x\right)+\varepsilon\left(y-\sum_{j=1}^n \lambda_j y \right) \right) \right\Vert\\
& = 2 \lambda\left\Vert \varphi-\sum_{j=1}^n \lambda_j b_{\alpha_j}\right\Vert<\frac{6}{n}.
\end{split}\]
Since $\Vert z\Vert_\infty\leq \Vert z\Vert_\varepsilon\leq \frac{1}{1-\varepsilon}\Vert z\Vert$ holds for every $z\in X$ we get that
\begin{equation}\label{eq:aproiniteounislice}
\left \Vert w-\sum_{j=1}^n \lambda_j w_j\right\Vert_\varepsilon<\frac{6}{1-\varepsilon}\frac{1}{n}.
\end{equation}
Taking into account the above inequality we will focus on approximating each $w_j$ by convex combinations of elements in the set
$$\left\{\frac{u+v}{2}: u,v\in B_\varepsilon, \Vert u-v\Vert_\varepsilon=2 \right\}.$$
Since $y$ is finitely supported we can find $m_0\in\mathbb N$ large enough to guarantee $y(\alpha_j\wedge m_0+k)=0$ holds for every $1\leq j\leq n$ and every $k\in\mathbb N$.

Now select $j\in \{1,\ldots, n\}$ and let us approximate $w_j$. In order to do so define, given $1\leq i\leq n$, the elements
\[\begin{split}u_i^j& :=\lambda(2b_{\alpha_j\wedge m_0+i}-\1)\\& +(1-\lambda)((1-\varepsilon)(x+(1-a_{\alpha_j\wedge m_0+i})e_{\alpha_j\wedge m_0+i})+\varepsilon (y+e_{\alpha_j\wedge m_0+i}))\end{split}\]
and
$$v_i^j:=\lambda(2b_{\alpha_j}-\1)+(1-\lambda)((1-\varepsilon)(x-(1+a_{\alpha_j\wedge m_0+i})e_{\alpha_j\wedge m_0+i})+\varepsilon (y-e_{\alpha_j\wedge m_0+i})).$$
Observe that both elements belong to $B_\varepsilon$ for every $1\leq i\leq n$. Moreover, since $b_{\alpha_j\wedge m_0+i}-b_{\alpha_j}=2e_{\alpha_j\wedge m_0+i}$ we have that
$$u_{i}^j-v_{i}^j=\lambda (2e_{\alpha_j\wedge m_0+i})+(1-\lambda)((1-\varepsilon) 2e_{\alpha_j\wedge m_0+i}+\varepsilon 2e_{\alpha_j\wedge m_0+i})=2e_{\alpha_j\wedge m_0+i}.$$
Thus
$$\Vert u_i^j-v_i^j\Vert_\varepsilon=2\Vert e_{\alpha_j\wedge m_0+i}\Vert_\varepsilon\geq 2\Vert e_{\alpha_j\wedge m_0+i}\Vert_\infty=2.$$
Now let us estimate $w_j-\frac{1}{n}\sum_{i=1}^n \frac{u_i^j-v_i^j}{2}$. To do so, given $1\leq i,j\leq n$, using that $b_{\alpha_j\wedge m_0+i}+b_j=2 b_{\alpha_j}+e_{\alpha_j\wedge m_0+i}$ we have
\[\begin{split}\frac{u_i^j+v_i^j}{2}& =\lambda(2 b_{\alpha_j}-\1)+(1-\lambda)((1-\varepsilon)x+\varepsilon y)\\ 
& +\lambda \frac{e_{\alpha_j\wedge m_0+i}}{2}+(1-\lambda)((1-\varepsilon)(-2a_{\alpha_j\wedge m_0+i})e_{\alpha_j\wedge m_0+i})\\
& = w_j + \lambda \frac{e_{\alpha_j\wedge m_0+i}}{2}+(1-\lambda)((1-\varepsilon)(-2a_{\alpha_j\wedge m_0+i})e_{\alpha_j\wedge m_0+i})
\end{split}\]
Consequently
\[\begin{split}
w_j-\frac{1}{n}\sum_{i=1}^n \frac{u_i^j+v_i^j}{2}& =\frac{1}{n}\sum_{i=1}^n \left(\lambda \frac{e_{\alpha_j\wedge m_0+i}}{2}+(1-\lambda)((1-\varepsilon)(-2a_{\alpha_j\wedge m_0+i})e_{\alpha_j\wedge m_0+i}) \right)\\
& = \frac{1}{n} \sum_{i=1}^n \left(\frac{\lambda}{2}-2(1-\lambda)(1-\varepsilon)a_{\alpha_j\wedge m_0+i}\right)e_{\alpha_j\wedge m_0+i}
\end{split}\]
Thus, taking into account that $\vert a_{\alpha_j\wedge m_0+i}\vert\leq 1$ holds for every $i, j$, we get
\[\begin{split}
\left\Vert w_j-\frac{1}{n}\sum_{i=1}^n \frac{u_i^j+v_i^j}{2}\right\Vert & =\frac{1}{n} \max\limits_{1\leq j\leq n}\left\{\left\vert \frac{\lambda}{2}-2(1-\lambda)(1-\varepsilon)a_{\alpha_j\wedge m_0+i}\right\vert \right\} \\
& \leq \frac{1}{n} \left(\frac{\lambda}{2}+2(1-\lambda)(1-\varepsilon) \right)\leq \frac{3}{n}.
\end{split}\]
Taking into account the equivalence constants we get
$$\left\Vert w_j-\frac{1}{n}\sum_{i=1}^n \frac{u_i^j+v_i^j}{2}\right\Vert_\varepsilon\leq \frac{3}{1-\varepsilon}\frac{1}{n}$$
Now define $z:=\sum_{i=1}^n \sum_{j=1}^n \frac{\lambda_i}{n}\frac{ u_i^j+v_i^j}{2}$. We have proved that
$$z\in \conv_{n^2}\left(\left\{\frac{u+v}{2}: u,v\in B_\varepsilon, \Vert u-v\Vert_\varepsilon>2-\delta\right\} \right).$$
Moreover, we have
\[\begin{split}
\Vert w-z\Vert& \leq \left\Vert w-\sum_{j=1}^n w_j\right\Vert_\varepsilon+\sum_{j=1}^n \lambda_j \left\Vert w_j-\frac{1}{n}\sum_{i=1}^n \frac{u_i^j+v_i^j}{2}
\right\Vert_\varepsilon \\
& < \frac{6}{1-\varepsilon}\frac{1}{n}+\sum_{j=1}^n \lambda_j \frac{1}{1-\varepsilon}\frac{3}{n}=\frac{9}{1-\varepsilon}\frac{1}{n}. 
\end{split}\]
Consequently $d(w, S_{n^2}^{2-\delta}(X))\leq \frac{9}{1-\varepsilon}\frac{1}{n}$. Observe that if $w\in \conv(-A\cup ((1-\varepsilon)B_X+\varepsilon B_{c_{00}}))$ we would get the same conclussion because $-w\in \conv(A\cup ((1-\varepsilon)B_X+\varepsilon B_{c_{00}}))$ and the set
$$\conv_{n^2}\left(\left\{\frac{u+v}{2}: u,v\in B_\varepsilon, \Vert u-v\Vert_\varepsilon>2-\delta\right\} \right)$$
is clearly symmetric.

Consequently, we conclude that given any 
$$w\in \conv(A\cup ((1-\varepsilon)B_X+\varepsilon B_{c_{00}})\cup \conv(-A\cup ((1-\varepsilon)B_X+\varepsilon B_{c_{00}})$$
we get $d(w, S_{n^2}^{2-\delta}(X))\leq \frac{9}{1-\varepsilon}\frac{1}{n}$, and by the density of the above set we conclude that 
$$C_{n^2}^{2-\delta}=\sup_{w\in B_\varepsilon}d(w, S_{n^2}^{2-\delta}(X))\leq \frac{9}{1-\varepsilon}\frac{1}{n}.$$
Since the sequence $\{C_n^{2-\delta}\}_{n\in\mathbb N}$ is decreasing in $n$ we get that $\lim_n C_n^{2-\delta}=0$ for every $\delta>0$, and thus $X$ has the uniform slice-D2P in virtue of Theorem~\ref{theo:charunislice}, which completes the proof of Theorem~\ref{theo:maintheodiffslid2punif}.

\begin{remark}
Observe that Theorem~\ref{theo:maintheodiffslid2punif} together with Proposition~\ref{prop:smallultrapower} implies that there are Banach spaces $X$ such that, for any free ultrafilter over $\mathbb N$, it follows that $X_\mathcal U$ has the slice-D2P but its unit ball contains non-empty relatively weakly open subsets of arbitrarily small diameter.
\end{remark}

\section{Difference between the uniform D2P and the uniform SD2P}\label{section:prelisegureno}

In this short section we will present some specific material that we will need for the proof of Theorem~\ref{theo:maintheodiffd2punif} in the next section. First of all, let us provide a description of a set which is employed in the renorming technique used in \cite[Theorem 2.5]{blr15eje2}.

Pick a nonincreasing null sequence $\{\varepsilon_n\}$ in $\mathbb R^+$. We construct an increasing sequence of closed, bounded and convex subsets $\{K_n\}$ in $c_0$ and a sequence $\{g_n\}$ in $c_0$ as
follows: First define $K_1=\{e_1\}$, $g_1=e_1$ and $K_2=\conv(e_1, e_1+e_2)$. Choose $l_2>1$ and an $\varepsilon_2$-net $g_2,\ldots ,g_{l_2}\in K_2$ in $K_2$. Assume that $n\geq 2$ and that $m_n,\ l_n
,\ K_n$ and $\{g_1,\ldots ,g_{l_n}\}$ have been constructed, with
$K_n\subseteq B_{\spann\{e_1,\ldots ,e_{m_n}\}}$ and $g_i\in K_n$ for every
$1\leq i\leq l_n$. Define $K_{n+1}$ as $$K_{n+1}=\conv(K_n\cup
\{g_i+e_{m_n+i}:1\leq i\leq l_n\}).$$ Consider $m_{n+1}=m_n+l_n$ and
choose $\{g_{l_{n}+1},\ldots ,g_{l_{n+1}}\}\in K_{n+1}$ so that
$\{g_1 ,\ldots ,g_{l_{n+1}}\}$ is an $\varepsilon_{n+1}$-net in
$K_{n+1}$. Finally we define $K_0=\overline{\cup_n K_n}$. Then it
follows that $K_0$ is a non-empty closed, bounded and convex subset
of $c_0$ such that $x(n)\geq 0$ for every $n\in \natu$ and $\Vert
x\Vert_{\infty}=1$ for every $x\in K_0$ and so $\diam(K_0)\leq 1$.

Now, fixed $i$, we have from the construction that
$\{g_i+e_{m_n+i}\}_n$ is a sequence in $K_0$ which is weakly convergent to
$g_i$ and $\Vert (g_i-e_{m_n+i})-g_i\Vert=\Vert e_{m_n+i}\Vert=1$
holds for every $n$. Then $\diam(K_0)=1$. We will freely use the set $K_0$ and the above construction throughout the rest of the paper. Observe that, from the
above construction, it follows that
$$K_0=\overline{\{g_i:i\in\natu\}}^{w}=\overline{\{g_i:i\in\natu\}}.$$
As before, the set $K_0$ above appears in the paper \cite{aor88} as an example of a subset of $c_0$ which fails the CPCP but such that $K_0$ contains convex combinations of slices of arbitrarily small diameter \cite[Theorem 1.2]{aor88}. Similarly to the case of the set STS, the property of $K_0$ that contains convex combinations of slices of arbitrarily small diameter and at the same time every non-empty relatively weakly open subset of $K_0$ has diameter $1=\diam(K_0)$ was used in \cite[Theorem 2.5]{blr15eje2} to provide examples of Banach spaces with the D2P but whose unit ball contains arbitrarily small convex combinations of slices. Moreover, the above set $K_0$ was used later to construct several counterexamples. For instance, a renorming of $C([0,1])$ involving $K_0$ was done in \cite[Section 4.6]{MPR} in order to provide an example of a super Daugavet point which is not a ccs $\Delta-$point. More recently, the set $K_0$ was used in \cite{lmr26} in order to get an equivalent renorming of $c_0\oplus_\infty\mathbb R$ with the property that every non-empty relatively weakly open subset of the unit ball has radius $1$ but such that the unit ball contains slices of diameter close to $1$ too. 

Let us conclude with the next result which, having an immediate proof, provides us an interesting criterion to determine the D2P in a ultrapower space, where we have not a good description of weak open sets.

\begin{proposition}\label{prop:condisufiD2P}
Let $X$ be a Banach space. Assume that for every $x\in B_X$ there are two sequences $(x_n),(y_n)$ in $B_X$ such that
\begin{enumerate}
    \item $(x_n-x)$ and $(y_n-y)$ are weakly null and,
    \item $\Vert x_n-y_n\Vert=2$ holds for every $n\in\mathbb N$.
\end{enumerate}
Then $X$ has the D2P.
\end{proposition}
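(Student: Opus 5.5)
Note first that hypothesis (1) contains an evident typo: it should read that $(x_n-x)$ and $(y_n-x)$ are weakly null, i.e.\ both sequences converge weakly to the given point $x$. This is the form provided by Lemma~\ref{lemma:c0inicial}. The plan is to show directly that every non-empty relatively weakly open subset $W$ of $B_X$ has diameter $2$. Since the diameter of any subset of $B_X$ is at most $2$, only the lower bound is at issue.

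Fix such a $W$ and a point $x\in W$. Apply the hypothesis to $x$ to obtain sequences $(x_n),(y_n)$ in $B_X$ that converge weakly to $x$ and satisfy $\Vert x_n-y_n\Vert=2$ for every $n$. Because $W$ is relatively weakly open in $B_X$ and contains $x$, there is a basic weak neighbourhood $U$ of $x$, determined by finitely many functionals $f_1,\ldots,f_k\in X^*$ and some $\delta>0$, with $U\cap B_X\subseteq W$. Weak convergence gives $f_j(x_n)\to f_j(x)$ and $f_j(y_n)\to f_j(x)$ for each $j$. As there are only finitely many functionals, there is $n_0$ such that $x_n,y_n\in U$ for all $n\geq n_0$.

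Since all $x_n,y_n$ lie in $B_X$, we get $x_n,y_n\in W$ for $n\geq n_0$. Hence $\diam(W)\geq \Vert x_{n_0}-y_{n_0}\Vert=2$, and so $\diam(W)=2$.

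There is no real obstacle here. The only point requiring care is to use the weak topology through finitely many functionals, so that both sequences eventually enter the same basic neighbourhood simultaneously. No use of slices or of Bourgain's lemma is needed.
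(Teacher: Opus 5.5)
Your proof is correct and is essentially the argument the paper intends: the paper omits the proof as "immediate", and your argument (both sequences eventually enter the basic weak neighbourhood determined by finitely many functionals, hence $W$, while $\Vert x_n-y_n\Vert=2$) is that argument. Your reading of the typo as "$(y_n-x)$ weakly null" matches how the proposition is used in the proof of Theorem~\ref{theo:maintheodiffd2punif}, where $([v_i^n]-[u_i])_n$ is shown to be weakly null.
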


\section{Proof of Theorem~\ref{theo:maintheodiffd2punif}}\label{section:prooftheoremunid2p}

Let $X$ be a Banach space containing $c_0$. Up to an application of  Lemma~\ref{lemma:c0inicial} we can assume that $X$ contains an isometric copy of $c$ and satisfies that, given $x\in B_X$ and $n\in\mathbb N$, there exists $a_n\in\mathbb R$ such that $\vert a_n\vert\leq 1$ and such that
$$x+(1-a_n)e_n\mbox{ and }x-(1+a_n)e_n$$
belong to $B_X$ for every $n\in\mathbb N$. Let $\varepsilon>0$ and consider the equivalent norm $\Vert\cdot\Vert_\varepsilon$ on $X$ whose unit ball is 
$$B_\varepsilon:=\overline{\co}((2K_0-\1)\cup -(2K_0-\1)\cup ((1-\varepsilon)B_X+\varepsilon B_{c_0})),$$
where $K_0$ is the set constructed in the previous section. In \cite[Theorem 2.5]{blr15eje2} it is proved that $(X,\Vert\cdot\Vert_\varepsilon)$ satisfies the D2P and that the unit ball contains convex combinations of slices of arbitrarily small diameter. Let us prove that $X_\varepsilon:=(X,\Vert\cdot\Vert_\varepsilon)$ has the uniform D2P.

In order to do so select any infinite set $I$ and any countably incomplete ultrafilter $\mathcal U$ over $I$. Let us prove that $(X_\varepsilon)_\mathcal U$ has the D2P. Call $A:=(2K_0-\1)$ and $B:=((1-\varepsilon)B_X+\varepsilon B_{c_0})$. Since $\frac{A-A}{2}\subseteq B$ we have by \cite[Lemma 2.4]{blr15eje2} that
$$\conv(A\cup -A\cup B)=\conv(A\cup B)\cup \conv(-A\cup B).$$
Since $\conv(A\cup -A\cup B)$ is dense in $B_\varepsilon$ we have by Proposition~\ref{prop:ultracierre} the equality
$$B_{(X_\varepsilon)_\mathcal U}=(B_{X_\varepsilon})_\mathcal U=\conv(A\cup -A\cup B)_\mathcal U=\left(\conv(A\cup B)\cup \conv(-A\cup B) \right)_\mathcal U.$$
Thanks to Proposition~\ref{prop:ultraunion} we get that 
$$B_{(X_\varepsilon)_\mathcal U}=\conv(A\cup B)_\mathcal U \cup \conv(-A\cup B)_\mathcal U.$$
In order to prove that $(X_\varepsilon)_\mathcal U$ has the D2P let us apply Proposition~\ref{prop:condisufiD2P}. Let $[u_i]\in \conv(A\cup B)_\mathcal U\cup \conv(-A\cup B)_\mathcal U$. Let us assume with no loss of generality that $[u_i]\in \conv(A\cup B)_\mathcal U$. Observe that since 
$$\conv(A\cup ((1-\varepsilon)B_X+\varepsilon B_{c_{00}}))$$
is dense in $\conv(A\cup B)$, a new application of Proposition~\ref{prop:ultracierre} reveals that
$$[u_i]\in \conv(A\cup B)_\mathcal U=\conv(A\cup ((1-\varepsilon)B_X+\varepsilon B_{c_{00}}))_\mathcal U,$$
so we can assume that, for every $i\in I$, we have
$$u_i=\lambda_i (2g_{p_i}-\1)+(1-\lambda_i)((1-\varepsilon)x_i+\varepsilon y_i),$$
where $\lambda_i\in [0,1]$, $p_i\in\mathbb N$, $x_i\in B_X$ and $y_i\in B_{c_{00}}$. Since $y_i$ has finite support we can find $q_i\in \mathbb N$ such that 
$$y_i(m_n+p_i)=0\ \forall n\geq q_i.$$
In order to apply Proposition~\ref{prop:condisufiD2P} let us consider, given $n\in\mathbb N$ and $i\in I$ the elements
\[\begin{split} u_i^n:=\lambda_i (2 (g_{p_i}+e_{m_{n+q_i}+p_i})-\1)& +(1-\lambda_i)((1-\varepsilon)(x+(1-a_{m_{n+q_i}+p_i})e_{m_{n+q_i}+p_i})\\
& +\varepsilon(y+e_{m_{n+q_i}+p_i})\end{split}\]
and
\[\begin{split} v_i^n:=\lambda_i (2 g_{p_i}-\1)& +(1-\lambda_i)((1-\varepsilon)(x-(1+a_{m_{n+q_i}+p_i})e_{m_{n+q_i}+p_i})\\
& +\varepsilon(y-e_{m_{n+q_i}+p_i}).\end{split}\]
Observe that $u_i^n, v_i^n\in B_\varepsilon$ holds for every $n\in\mathbb N$ and every $i\in I$. Let us prove that the sequences $([u_i^n])_n,([v_i^n])_n$ satisfy the hypothesis of Proposition~\ref{prop:condisufiD2P}. Given $i\in I$ and $n\in\mathbb N$ we have
$$u_i^n-v_i^n=\lambda_i 2 e_{m_{n+q_i}+p_i}+(1-\lambda_i)((1-\varepsilon)2e_{m_{n+q_i}+p_i}+\varepsilon2e_{m_{n+q_i}+p_i})=2e_{m_{n+q_i}+p_i}.$$
Thus $\Vert u_i^n-v_i^n\Vert_\varepsilon\geq \Vert u_i^n-v_i^n\Vert=2\Vert e_{m_{n+q_i}+p_i}\Vert=2$. The arbitrariness of $i\in I$ implies
$$\Vert [u_i^n]-[v_i^n]\Vert=\lim_\mathcal U\Vert u_i^n-v_i^n\Vert_\varepsilon=2.$$
So in order to apply Proposition~\ref{prop:condisufiD2P} it remains to prove that both $([u_i^n]-[u_i])_n$ and $([v_i^n]-[u_i])_n$ are weakly null. Let us begin with $([u_i^n]-[u_i])_n$. Given $i\in I$ and $n\in\mathbb N$ we have
\[\begin{split}
u_i^n-u_i& =\lambda_i 2e_{m_{n+q_i}+p_i}+(1-\lambda_i)((1-\varepsilon)(1-a_{m_{n+q_i}+p_i})e_{m_{n+q_i}+p_i}+\varepsilon e_{m_{n+q_i}+p_i})\\
& =(2\lambda_i+(1-\lambda_i)(1-(1-\varepsilon) a_{m_{n+q_i}+p_i})e_{m_{n+q_i}+p_i}\\
& = (1+\lambda_i-(1-\varepsilon)(1-\lambda_i)a_{m_{n+q_i}+p_i})e_{m_{n+q_i}+p_i}.
\end{split}\]
In view of the above, given $c_1,\ldots, c_t\in\mathbb R$ we have that
\[\begin{split}\left\Vert \sum_{j=1}^t c_j(u_i^j-u_i) \right\Vert& =\max_{1\leq j\leq t} \vert c_j\vert \left\vert (1+\lambda_i-(1-\varepsilon)(1-\lambda_i)a_{m_{j+q_i}+p_i}) \right\vert\\
& \leq  \max_{1\leq j\leq t} \vert c_j\vert (1+1+(1-\varepsilon)(1-\lambda)\vert a_{m_{j+q_i}+p_i}\vert)\\
& \leq 3 \max_{1\leq j\leq t}\vert c_j\vert.
\end{split}\]
Taking into account the equivalence of norms we get that
$$\left\Vert \sum_{j=1}^t c_j(u_i^j-u_i) \right\Vert_\varepsilon\leq \frac{3}{1-\varepsilon} \max_{1\leq j\leq t}\vert c_j\vert.$$
Since $i\in I$ was arbitrary we conclude that
$$\left\Vert \sum_{j=1}^t c_j([u_i^j]-[u_i]) \right\Vert\leq \frac{3}{1-\varepsilon} \max\limits_{1\leq j\leq t} \vert c_j\vert.$$

Since $c_1,\ldots, c_t\in\mathbb R$ were arbitrary we conclude that the operator
$$\begin{array}{ccc}
 \phi: c_0 & \longrightarrow  & (X_\varepsilon)_\mathcal U  \\
  e_n & \longmapsto   & [u_i^n]-[u_i]
\end{array}$$
is a linear and bounded operator with $\Vert \phi\Vert\leq \frac{3}{1-\varepsilon}$. Since $\phi$ is continuous it is $w-w$ continuous. Hence $(\phi(e_n))_n=([u_i^n]-[u_i])_n$ is weakly null.

Let us now move to the case of $([v_i^n]-[u_i])_n$. Given $i\in I$ and $n\in\mathbb N$ observe that
\[\begin{split}
v_i^n-u_i& =(1-\lambda_i)((1-\varepsilon)(-1)(1+a_{m_{n+q_i}+p_i})e_{m_{n+q_i}+p_i}-\varepsilon e_{m_{n+q_i}+p_i})\\
& =-(1-\lambda_i)((1-\varepsilon)(1+a_{m_{n+q_i}+p_i})+\varepsilon)e_{m_{n+q_i}+p_i}\\
& = -(1-\lambda_i)(1+(1-\varepsilon)a_{m_{n+q_i}+p_i})e_{m_{n+q_i}+p_i}
\end{split}\]
In view of the above, given $c_1,\ldots, c_t\in\mathbb R$ we have that
\[\begin{split}\left\Vert \sum_{j=1}^t c_j(v_i^j-u_i) \right\Vert& =\max_{1\leq j\leq t} \vert c_j\vert \left\vert(1-\lambda_i)(1+(1-\varepsilon)a_{m_{j+q_i}+p_i}) \right\vert\\
& \leq  \max_{1\leq j\leq t} \vert c_j\vert (1+(1-\varepsilon)\vert a_{m_{j+q_i}+p_i}\vert)\\
& \leq 2 \max_{1\leq j\leq t}\vert c_j\vert.
\end{split}\]
With a similar argument to the above we obtain that the operator 
$$\begin{array}{ccc}
 \psi: c_0 & \longrightarrow  & (X_\varepsilon)_\mathcal U  \\
  e_n & \longmapsto   & [v_i^n]-[u_i]
\end{array}$$
is bounded with $\Vert\psi\Vert\leq \frac{2}{1-\varepsilon}$, and then we conclude that $([v_i^n]-[u_i])_n$ is weakly null.

The arbitrariness of $[u_i]\in B_{(X_\varepsilon)_\mathcal U}$ implies by Proposition~\ref{prop:condisufiD2P} that $(X_\varepsilon)_\mathcal U$ has the D2P, as desired.

Let us conclude with a final remark.

\begin{remark}\begin{enumerate}
\item Observe that Theorem~\ref{theo:maintheodiffd2punif} together with Proposition~\ref{prop:smallultrapower} imply that there are Banach spaces $X$ such that, for any free ultrafilter over $\mathbb N$, it follows that $X_\mathcal U$ has the D2P but its unit ball contains convex combinations of slices of arbitrarily small diameter.
\item In the proof of Theorem~\ref{theo:maintheodiffd2punif} we have proved not only that $X$ has the uniform D2P (i.e. $X_\mathcal U$ has the D2P for every free ultrafilter $\mathcal U$ over $\mathbb N$) but also that $X_\mathcal U$ has the D2P for any countably incomplete ultrafilter over any infinite set $I$. This result is not surprising, as it is known that the uniform SD2P has this behaviour \cite[Corollary 3.8]{mr26uni} and, with a similar proof, one can similarly prove it for the uniform slice-D2P. For the D2P, however, the situation is different since it is not evident that, in general, a Banach space $X$ has the uniform D2P if and only if $X_\mathcal U$ has the D2P for any countably incomplete ultrafilter $\mathcal U$ over any infinite set $I$ (even though we believe that such equivalence holds true).
\end{enumerate}
\end{remark}

\section{Concluding  remarks}\label{section:remarks}

As we pointed out in the introduction, it was proved in \cite[Remark 3.10]{mr26uni} that the uniform D2P and the uniform SD2P are different properties just considering $X\oplus_p Y$ with $1<p<\infty$ and $X,Y$ being two Banach spaces with the uniform SD2P. The above result is based on the fact that the isometric identification $X_\mathcal U\oplus_p Y_\mathcal U=(X\oplus_p Y)_\mathcal U$ implies that the uniform D2P is inherited by finite $\ell_p$-sums. At this point, the following question makes sense.

\begin{question}\label{question:uniformd2plpsums}
Let $\{X_i: i\in I\}$ be a family of Banach spaces with the uniform D2P and let $1\leq p<\infty$. Is it true that $\ell_p(I,X_i)$ has the uniform D2P?
\end{question}

Our interest in the above question is that, if the answer were affirmative,  we could get examples of Banach spaces with the uniform D2P but whose unit ball contains convex combinations of slices of arbitrarily small diameter thanks to the following result.

\begin{proposition}\label{prop:ccslicearbipe}
Let $\{X_n: n\in\mathbb N\}$ be a sequence of non-zero Banach spaces and let $1<p<\infty$. Then $X:=\left(\oplus_{n=1}^\infty X_n \right)_p$ contains convex combinations of slices of $B_X$ of arbitrarily small diameter.    
\end{proposition}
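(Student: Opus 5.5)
Fix $\delta>0$. The plan is to build, for each large $N$, a convex combination of $N$ slices, each concentrated on a single coordinate block, with diameter $O(N^{1/p-1})$. Since $1<p<\infty$, this bound tends to $0$ as $N\to\infty$.

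Pick $x_n^*\in S_{X_n^*}$ for each $n$, and let $f_n\in S_{X^*}$ be the functional $f_n(z)=x_n^*(z_n)$, acting only on the $n$-th coordinate. For $\eta>0$ consider the slices
$$S_n:=S(B_X,f_n,\eta)=\{z\in B_X: x_n^*(z_n)>1-\eta\},\qquad 1\leq n\leq N,$$
and the convex combination $C:=\frac1N\sum_{n=1}^N S_n$.

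The key estimate uses uniform convexity of the scalar $\ell_p$-norm. If $z\in S_n$, then $\Vert z_n\Vert>1-\eta$, so
$$\sum_{k\neq n}\Vert z_k\Vert^p\leq 1-(1-\eta)^p=:\rho(\eta)^p,$$
where $\rho(\eta)\to0$ as $\eta\to0$. Hence each $z\in S_n$ decomposes as $z=z_n e_{(n)}+r$, with $\Vert r\Vert\leq\rho(\eta)$; here $z_ne_{(n)}$ denotes $z_n$ placed in the $n$-th coordinate.

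Now take two elements of $C$, say $u=\frac1N\sum_n z^{(n)}$ and $v=\frac1N\sum_n w^{(n)}$ with $z^{(n)},w^{(n)}\in S_n$. Then
$$u-v=\frac1N\sum_{n=1}^N (z^{(n)}_n-w^{(n)}_n)e_{(n)}+\frac1N\sum_{n=1}^N(r_n-r'_n).$$
The second term has norm at most $2\rho(\eta)$. The first term has disjointly supported coordinates, each of norm at most $2$, so its norm is at most $\frac1N\left(N\,2^p\right)^{1/p}=2N^{1/p-1}$. This gives
$$\diam(C)\leq 2N^{1/p-1}+4\rho(\eta).$$
To make this small, first choose $N$ with $2N^{1/p-1}<\delta/2$, then choose $\eta$ with $4\rho(\eta)<\delta/2$; this yields $\diam(C)<\delta$.

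The main point to check is the leakage bound on the off-coordinates. It uses only that the $\ell_p$-norm forces the mass of $z$ to concentrate in coordinate $n$ once $\Vert z_n\Vert$ is close to $1$, which is immediate from $\Vert z\Vert^p=\sum_k\Vert z_k\Vert^p\leq1$. Two remaining details are routine: the nonemptiness of each slice (guaranteed since $\Vert f_n\Vert=1$) and the hypothesis that every $X_n$ is non-zero (needed to pick $x_n^*\in S_{X_n^*}$).
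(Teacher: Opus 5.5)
Your proof is correct and follows essentially the paper's argument. Both use slices given by norm-one functionals acting on a single coordinate, bound the off-coordinate part by $(1-(1-\eta)^p)^{1/p}$, and control the on-coordinate parts by the $\ell_p$-norm of the weights. The differences are cosmetic: you take the explicit weights $\frac1N$ (so the weight vector has $\ell_p$-norm $N^{1/p-1}$), while the paper picks a finitely supported $(\lambda_n)\in S_{\ell_1}$ with $\sum\lambda_n^p<\varepsilon^p$, and you estimate $\Vert u-v\Vert$ directly whereas the paper shows that $C$ lies in a small ball centred at $0$.
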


\begin{proof}
Let $\varepsilon>0$ and consider a sequence $(\lambda_n)\in S_{\ell_1}$ such that $\lambda_n\geq 0$ and such that $\sum_{n=1}^\infty \lambda_n^p<\varepsilon^p$ (such sequence exists since the the inclusion operator $i:\ell_1\longrightarrow \ell_p$ is not bounded below). We can assume that there exists $n\in\mathbb N$ such that $\lambda_i=0$ holds for $i\geq n$ thanks to an easy density argument.

Now, for every $1\leq i\leq n$ consider $f_i\in S_{X_i^*}$ and consider 
$$g_i:=(h_n)_{n\in \mathbb N}\in S_{X^*}; h_i=f_i, h_n=0\ \forall n\neq i.$$
Let $\delta>0$ and define $S_i:=S(B_X, g_i,\delta), 1\leq i\leq n$ and $C:=\sum_{i=1}^n \lambda_i S_i$. Given $x=\sum_{i=1}^n \lambda_i x_i\in C$ we have that
$$1-\delta<g_i(x_i)\leq \Vert x_i(i)\Vert.$$
Moreover
$$1\geq \sum_{j=1}^\infty \Vert x_i(j)\Vert ^p=\Vert x_i(i)\Vert ^p+\sum_{j\neq i} \Vert x_j(i)\Vert ^p>(1-\delta)^p+\sum_{j\neq i}\Vert x_i(j)\Vert^p.$$
Thus
$$\sum_{j\neq i} \Vert x_i(j)\Vert^p<1-(1-\delta)^p.$$
Now if we define $y_i\in X$ by $y_i(i)=x_i(i)$ and $y_i(j)=0$ for $j\neq i$ we have
$$\Vert y_i-x_i\Vert^p=\sum_{j\neq i}\Vert x_i(j)\Vert^p<1-(1-\delta)^p.$$
On the other hand, it is immediate that $y_i\in S_i$, so $\sum_{i=1}^n \lambda_i y_i\in C$ and
$$\left\Vert \sum_{i=1}^n \lambda_i (y_i-x_i)\right\Vert< \left(1-(1-\delta)^p\right)^\frac{1}{p}.$$
Furthermore
$$\left\Vert \sum_{i=1}^n \lambda_i y_i\right\Vert^p=\sum_{j=1}^\infty\left\Vert \sum_{i=1}^n \lambda_i y_i(j)\right\Vert^p=\sum_{i=1}^n \lambda_i^p \Vert y_i(i)\Vert^p\leq \sum_{i=1}^n \lambda_i^p<\varepsilon^p.$$
Consequently $\Vert x\Vert\leq \varepsilon+\left(1-(1-\delta)^p\right)^\frac{1}{p}$. The arbitrariness of $x\in C$ proves
$$C\subseteq (\varepsilon+(1-(1-\delta)^p)^\frac{1}{p})B_X.$$
The arbitrariness of $\varepsilon$ and $\delta$ concludes the proof.
\end{proof}
A couple of remarks are pertinent.
\begin{remark}\label{remark:l2sumamejora}

\begin{enumerate}
\item The ideas behind the proof of Proposition~\ref{prop:ccslicearbipe} are inspired in the proof of \cite[Proposition 3.7]{llmr24}.

\item Observe that Proposition~\ref{prop:ccslicearbipe} was implicitly proved in \cite[Theorem 2.12]{ahntt16} under the extra assumption that each $X_n$ contains convex combination of slices of diameter $\leq \varepsilon_n$, where $(\varepsilon_n)\rightarrow 0$. 
\end{enumerate}
\end{remark}

We do not know whether the answer to Question~\ref{question:uniformd2plpsums}. Let us finish the paper pointing out that the idea of using the identification $X_\mathcal U\oplus_p Y_\mathcal U=(X\oplus_p Y)_\mathcal U$ can not be pushed further to infinitely many summands, as the following example shows.

\begin{example}\label{exam:ultrapower}
Let $p\in (1,\infty)\setminus\{2\}$, let $X_n=\mathbb R$ and let $\mathcal U$ be any free ultrafilter over $\mathbb N$. Observe that $(X_n)_\mathcal U=\mathbb R_\mathcal U=\mathbb R$. Thus $\left( \oplus_{n=1}^\infty (X_n)_\mathcal U\right)_p=\ell_p$ isometrically.  However, $\left(\oplus_{n=1}^\infty X_n\right)_p=\ell_p$, and $(\ell_p)_\mathcal U$ is not isometrically isomorphic to $\ell_p$, for instance, since $(\ell_p)_\mathcal U$ contains an isometric copy of $\ell_2$.

Indeed, given any infinite dimensional Banach space $X$, Dvoretzky's theorem implies that $\ell_2$ is finitely representable in $X$ \cite[Theorem 11.3.13]{alka}, so $\ell_2$ is isometrically isomorphic to a subspace of $X_\mathcal U$ thanks to (ii) in \cite[Proposition 11.1.12]{alka}.

\end{example}

\section*{Acknowledgements}

This research has been supported  by MCIU/AEI/FEDER/UE\\  Grant PID2021-122126NB-C31 and by Junta de Andaluc\'{\i}a Grant FQM-0185.

\end{document}